\newtheorem{theorem}{Theorem}[section]
\newtheorem{lemma}[theorem]{Lemma}
\newtheorem{definition}[theorem]{Definition}
\newtheorem{corollary}[theorem]{Corollary}
\newtheorem{remark}[theorem]{Remark}
\theoremstyle{remark}
\newtheorem*{note}{\bf Note}
\def\pr{\textup{P\/}}
\def\ex{\textup{E\/}}
\def\mean{\textup{E\/}}
\def\eps{\varepsilon}
\def\ga{\gamma}
\def\part{\partial}
\newcommand{\beq}{\begin{equation}}
\newcommand{\eeq}{\end{equation}}
\newcommand{\floor}[1]{\lfloor #1 \rfloor}
\newcommand{\lp}{\left(}
\newcommand{\rp}{\right)}
\newcommand{\din}[1]{D_{\text{in}}(#1)}
\newcommand{\dout}[1]{D_{\text{out}}(#1)}
\newcommand{\indx}{d_i(x)}
\newcommand{\outdx}{d_o(x)}
\newcommand{\gnk}{G_{n,k}}
\newcommand{\dgnk}{{\overrightarrow G}_{n,k}}
\newcommand{\diam}{\textup{diam}}
\renewcommand{\emptyset}{\varnothing}
\numberwithin{equation}{section}
\title{On connectivity, conductance and bootstrap percolation for a random k-out, age-biased
graph}
\date{}
\author{H\"{u}seyin Acan \thanks{Supported by National Science Foundation Fellowship (Award No.~1502650).}\\ Department of Mathematics\\ Rutgers University\\ Piscataway, NJ 08873\\
\texttt{huseyin.acan@rutgers.edu}
\and
Boris Pittel\\
Department of Mathematics\\ The Ohio State University\\ Columbus, OH 43210\\ \texttt{bgp@math.osu.edu}
}
\begin{document}
\maketitle

\begin{abstract}
A uniform attachment graph (with parameter $k$), denoted $G_{n,k}$ in the paper, is a random graph on the vertex set $[n]$, where each vertex $v$ makes $k$ selections from $[v-1]$ uniformly and independently, and these selections determine the edge set. We study several aspects of this graph. Our motivation comes from two similarly constructed, well-studied random graphs: $k$-out graphs and preferential attachment graphs. In this paper, we find the asymptotic distribution of its minimum degree and connectivity, and study the expansion properties of $G_{n,k}$ to show that the conductance of $G_{n,k}$ is of order $(\log n)^{-1}$. 
We also study the bootstrap percolation on $G_{n,k}$, where, each vertex is either initially infected with probability $p$, independently of others, or gets infected later as a result of having $r$ infected neighbors at some point. We show that, for $2\le r\le k-1$, if $p\ll (\log n)^{-r/(r-1)}$, then, with probability approaching 1, the process ends before all vertices get infected. On the other hand, if $p\ge \omega(\log n)^{-r/(r-1)}$, where $\omega$ is a certain very slowly growing function, then all the vertices get infected with probability approaching 1.

\bigskip
\noindent \textbf{Key words:} uniform attachment graph, connectivity, conductance, bootstrap percolation\\
\noindent \textbf{Mathematics Subject Classification}: 05C80, 60C05
\end{abstract}

\section{Introduction}
We study a dynamic random graph model, which is called a \emph{uniform attachment graph} in~\cite{MJKS}. In this model, new vertices are added one at a time to the graph, and each time a vertex is added, it selects $k$ neighbors uniformly and independently (with repetition) from the already present vertices, where $k$ is a parameter of the model. This gives rise to a growing directed multigraph with all out-degrees $k$. Our interest is in the simple undirected graph obtained from this graph by removing the edge orientations and multiple edges. The case $k=1$ gives a random recursive tree, which has been studied extensively.

By construction, uniform attachment graphs are similar to two other families of graphs which have been well studied: preferential attachment graphs and uniform $k$-out graphs. In both graphs, each vertex chooses $k$ neighbors. In the former, the choices heavily depend on the degrees of the previous vertices whereas in the latter the choices are made uniformly from the whole vertex set. As a result, in preferential attachment graphs, there are few vertices of very large degrees (hubs) and many vertices of small degrees, whereas in $k$-out graphs the degrees are largely close to each other, and their joint distribution is symmetric. In a sense, the graphs we consider in this paper lie in between these two families: vertex $v_i$ chooses its out-neighbors from the vertices $\{v_1,\dots,v_{i-1}\}$ as in the preferential attachment graphs, but the choices it makes are uniform and hence do not depend on the degrees of the potential out-neighbors. The in-degrees of the older vertices tend to be larger, but this bias is weaker than the bias in the preferential attachment graphs.

Recently Magner, Janson, Kollias and Szpankowski~\cite{MJKS} studied the symmetry in uniform attachment graphs and showed that the graph, after $n$ vertices are added and as $n\to \infty$, is symmetric (i.e.\ it has a nontirivial automorphism) with high probability (whp) for $k=1$, and with probability bounded away from 0 for $k=2$. They also conjectured that it is asymmetric for $k\ge 3$. More recently, Frieze, P\'erez-Gim\'enez, Pra\l{}at and B. Reiniger~\cite{FGPR} studied the hamiltonicity and perfect matchings in uniform attachment graphs;  they proved that whp the graph has a perfect matching for $k\ge 159$ and it is Hamiltonian for $k\ge 3214$.

In this paper, we study several other aspects of the uniform attachment graph $G_{n,k}$.
We begin with the limiting distribution of the minimum degree. We show that whp the minimum degree is either $k-1$ or $k$, and  that the number of vertices with degree $k-1$ converges in distribution to a Poisson random variable with parameter $(k-1)/2$. We demonstrate that whp the connectivity of the graph is equal to its minimum degree so that the connectivity distribution
is asymptotically supported by $\{k-1,k\}$ as well. (A similar closeness between connectivity and minimum degree is on
display in a classic result proved by Bollob\'as and Thomason \cite{BolTho}: for every deterministic $k$, which may depend
arbitrarily on $n$, whp the moments when the minimum degree and the connectivity of the Erd\"os-R\'enyi evolving random graph weakly exceed $k$, coincide.) 

This result makes it natural to study the expansion properties of $G_{n,k}$. We show  
that whp $G_{n,k}$ is a vertex-expander with a positive  limiting, $k$-dependent, expansion rate. We use this claim to show 
 that whp $G_{n,k}$ is an edge-expander as well, with the expansion rate (i.e.\ {\it conductance\/} $\Phi(G_{n,k}))$
of order $\log^{-1} n$ at least. This implies that whp the total variation distance between the uniform random walk on $G_{n,k}$ and the stationary distribution decays quasi-geometrically, at the rate $\exp(-\Theta(\log^{-2}n))$. It brings to mind the work of Fountoulakis and Reed \cite{FouRee} and later Benjamini, Kozma and Wormald \cite{BenKozWor} who proved, independently, that the total variation mixing time for the walk on the giant component of $G(n,\pr(\text{edge}=\pi))$ and $G(n,m)$ is whp of order $\Theta(\log^2n)$. 

In the second part we study bootstrap percolation on $G_{n,k}$. Bootstrap percolation is a process which starts with an initial set of ``infected'' vertices. Afterward, at each step, an uninfected vertex with at least $r$ infected neighbors becomes infected and stays infected forever. Here $r$ is a parameter of the process, and it is called the activation threshold. If $r=1$, then all vertices in a connected graph get infected at some point as long as there is at least one infected vertex initially. However, for $r>1$, the final set of infected vertices may strongly depend on the graph and the initial set of infected vertices.

Both extremal and probabilistic aspects of bootstrap percolation have been of keen interest in mathematics and statistical mechanics, and have been studied widely. A natural question is: what can we say about the number of infected vertices at a given time for a given set of initally infected vertices? In particular, which sets of infected vertices spread the infection to all vertices?

In the extremal part, for a given graph, the most typical problem is finding a minimum-cardinality set of initially infected vertices that makes all the vertices infected eventually. For the probabilistic angle, one usually starts with a random set of initially infected vertices in a random graph or a deterministic graph and analyzes the distribution of the number of infected vertices at the terminal state of the process.
Given a graph, a popular choice for the random configuration of the infected vertices is the outcome of the following experiment: 
for each of the vertices flip a coin with heads probability $p=p(n)$, and declare the vertex infected if the coin turns up heads.

The problem of finding a threshold value of the initial infection probability $p$ for the property that ``all the vertices become infected eventually'' has been studied for many graphs. 
For example, Balogh and Pittel~\cite{BP} studied this problem for random regular graphs, and Janson, \L uczak, Turova and Vallier~\cite{JLTV}  for $G(n,\pi)$ in the full  range of $\pi:=\pr(\text{edge})$. 
In a more general, non-homogeneous setting, in~\cite{Ami1} and~\cite{Ami2}, Amini studied the percolation for undirected and
directed random graphs with given degree sequences, subject to some restrictions.

The problem has also been studied for scale-free random graphs, where the degrees follow a power law distribution. A well known class of such graphs is the preferential attachment model, which was introduced by Barab\'asi and Albert~\cite{BA} and defined and studied rigorously by Bollob\'as, Riordan, Spencer and Tusn\'ady~\cite{BRST}. Abdullah and Fountoulakis~\cite{AbF} gave a detailed analysis of the bootstrap percolation on generalized preferential attachment graphs. In particular, their results yield immediately that $n^{-1/2}$ is a threshold for the spread of infection to all vertices for the basic Barab\'asi-Albert model.  Similar results were previously also established by Ebrahimi, Gao, Ghasemiesfeh and Schoenenbeck~\cite{EGGS}. They proved, for the Barab\'asi-Albert model, that (i) if $p$ is much smaller $n^{-1/2}$, then the final set of infected vertices is not the whole vertex set and (ii) if $p$ is much bigger than $n^{-1/2}\log n$, then whp the whole set is infected eventually in $O(\log n)$ steps.

Another random graph model with power law distribution was introduced by Chung and Lu~\cite{CL}, which is known as the Chung-Lu model. In this random graph model, vertices $i$ and $j$ are joined by an edge with probability $w(i)w(j)/W$, where $w\,:\,V\rightarrow \mathbb{R}^+$ is a weight function and $W=\sum_{k} w(k)$. Accordingly, assuming $\max_j \{w(i)w(j)\}\le W$, the expected degree of vertex $i$ is $w_i$. With a suitable choice of weight sequence $w$, one gets a power law degree distribution.
For these graphs, Amini and Fountoulakis~\cite{AmF} found a threshold value $q=q(n)$ such that (i) there is no evolution when $p\ll q$ and (ii) a linear size infection occurs when $p\gg q$.
Improving this result, Amini, Fountoulakis and Panagiotou~\cite{AFP}, proved a law of large numbers for the size of the final infection when $p\gg q$. 

In this paper, we add the uniform attachment graph to this list. 
We show that (i) if each vertex is initially infected (independently of each other) with probability $p\ge \omega(\log n)^{-r/(r-1)}$, where $\omega$ is growing fairly slowly, then whp all the vertices become infected eventually, and (ii) if each vertex is infected initially with probability $p\ll(\log n)^{-r/(r-1)}$, then whp the process ends up with some vertices remaining uninfected.

In Section~\ref{sec:results} we introduce our notation and fully state our main results for the uniform attachment graph $G_{n,k}$. In Section~\ref{sec:mindegree}, we study the minimum degree and the connectivity of $G_{n,k}$.  In Section~\ref{sec:expander}, we show that $G_{n,k}$ is a vertex-expander, and an edge-expander, and apply the latter property to determine the likely rate
of convergence of the uniform walk distribution to the stationary one. In Section~\ref{sec:BP}, we prove the two-sided
estimate of the threshold for the bootstrap percolation.

\section{Main results}\label{sec:results}

\noindent \textbf{Notation.} We denote by $[n]$ the set of first $n$ positive integers and by $[m+1,n]$ the set of integers from $m+1$ to $n$. We use the standard notation $O(\cdot), \Theta(\cdot)$, etc.\ to describe the growth rates of functions of interest. We use $f(n)\le_b g(n)$ instead of $f(n)=O(g(n))$ when $g(n)$ is a bulky expression. When $f(n)/g(n)\to 1$ as $n\to \infty$, we write $f(n)\sim g(n)$. We say that an event $E_n$  occurs with high probability (whp), if $\lim_{n\to\infty} \pr(E_n)=1$.

We now introduce the uniform attachment graph $G_{n,k}$, which has the vertex set $[n]$.
Each vertex $u\in [2,n]$ makes $k$ independent selections uniformly from $[u-1]$ and for each selection $v$, the edge $uv$ is added to the graph.
In the end, multiple edges between two vertices are reduced to a single edge. It can also be viewed as the snapshot after $n$ steps of the following graph process $\{G_{t,k}\}_{t\ge1}$. The graph $G_{1,k}$ is the unique edgeless graph with a single vertex 1.
For $t\ge2$, $G_{t,k}$ is obtained from $G_{t-1,k}$ by adding the vertex $t$, and $k$ edges from $t$ to the previous vertices, where the other end of each edge is chosen uniformly and independently from $[t-1]$. (Multiple edges are reduced to a single edge.)

Although our main interest is in $G_{n,k}$, which is an undirected graph, we also use the directed version in our proofs. In the directed version $\dgnk$, the edges are oriented toward smaller vertices. In other words, each vertex selects the endpoints of edges emanating from itself.

$G_{n,k}$ is a variant of a well-known $k$-out graph, in which vertex $u$ chooses its neighbors from the whole set $[n]$ instead of $[u-1]$. Hence, although the average degrees are about the same, in $G_{n,k}$ there is a clear bias toward the smaller vertices. On the other hand, this bias is not as strong as the bias in preferential attachment graphs, where vertex $u$ chooses $k$ neighbors from $[u-1]$, not uniformly but proportionally to the degrees of the previous vertices. Whereas whp a preferential attachment graph has maximum degree of polynomial order, whp the maximum degree of $G_{n,k}$ is only $O(\log n)$.

We denote by $\din{v}$ and $\dout{v}$ the in-degree and the out-degree of the vertex $v$, respectively, in $\dgnk$. The degree of a vertex $v$ in $\gnk$, denoted $D(v)$, is the sum of $\din{v}$ and $\dout{v}$.
Note that $\dout{v}\le k$ for each $v\in [n]$, and easy computations show $\mean[\din{v}] \sim k\log(n/v)$ and $\max_v\din{v}\sim k\log n$ whp. Consequently, $\max_v D(v)\sim k\log n$ whp.
We denote by $\delta_n$ the minimum degree of $G_{n,k}$. Our first result gives the limiting distribution of $\delta_n$.

\begin{theorem}\label{thm:mindeg}
Whp, $\delta_n\in \{k-1,k\}$ and
\[
\lim_{n\to\infty}\!\pr(\delta_n =k)=e^{-(k-1)/2}.
\]
\end{theorem}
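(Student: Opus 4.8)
The plan is to analyze the degree sequence via the directed structure. For a vertex $v$, we have $D(v) = \din{v} + \dout{v}$, and $\dout{v} = k$ for every $v \geq 2$ except when some of $v$'s $k$ selections from $[v-1]$ coincide (or when $v=1$, where $\dout{1}=0$). So a vertex has small total degree only in two situations: (a) $v$ is small (say $v \in [2, k]$) where $\dout{v} < k$ is forced, or (b) $v$ is larger but $\din{v}$ is very small — essentially $0$ or $1$ — and possibly $\dout{v}$ is reduced by a collision. First I would show that whp $\delta_n \geq k-1$: a vertex $v$ with $D(v) \leq k-2$ needs either to lose $\geq 2$ out-edges to collisions while having tiny in-degree, or to have in-degree $0$ while losing an out-edge, etc.; a union bound over $v$, using $\pr(\din{v} = j) $ which behaves like a Poisson$(k\log(n/v))$ probability for the relevant small $j$, together with the collision probabilities $O(v^{-1})$ for losing out-edges, should make $\pr(\delta_n \leq k-2) \to 0$. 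The dominant terms to watch are the smallest vertices $v = O(1)$, where I must check by hand that they each acquire enough in-edges whp.

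Next, the heart of the matter is computing $\pr(\delta_n \geq k)$, equivalently $\pr(\text{no vertex has degree exactly } k-1)$. The natural tool is the method of moments / Poisson approximation: let $X_n$ be the number of vertices $v$ with $D(v) = k-1$. I would show $\E[X_n] \to (k-1)/2$ and more generally that the factorial moments $\E[(X_n)_m] \to \big((k-1)/2\big)^m$, so that $X_n \xrightarrow{d} \mathrm{Pois}((k-1)/2)$ and hence $\pr(X_n = 0) \to e^{-(k-1)/2}$, which combined with the previous paragraph gives the theorem. For the first moment: a vertex $v$ contributes to $X_n$ essentially when $\dout{v} = k$ and $\din{v} = k-1 - k$... — wait, more carefully, the realistic contributing configurations are $\din{v} \in \{0,1\}$ combined with the appropriate number of out-collisions. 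For large $v$, $\pr(\din{v} = 0) = \prod_{u=v+1}^{n}(1 - u^{-1})^k \sim (v/n)^k$, which summed over $v$ gives $\sum_v (v/n)^k = \Theta(n/(k+1))$ — too big. So the degree-$(k-1)$ vertices cannot be the in-degree-zero ones with full out-degree; rather $\din{v}=0$ with full $\dout{v}=k$ gives degree $k$, not $k-1$. The degree-$(k-1)$ event must come from: $\din{v} = 0$ and exactly one out-collision (so $\dout{v} = k-1$), OR $\din{v} = 1$ where the unique in-neighbor is... no, that gives degree $k+1$. Let me reconsider: I want $\din{v} + \dout{v} = k-1$ with $\dout{v} \leq k$, so either $\dout{v} = k-1, \din{v} = 0$, or $\dout{v} = k, \din{v} = -1$ (impossible). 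Hence the \emph{only} way is $\dout{v} = k-1$ (exactly one collision among $v$'s $k$ choices, probability $\sim \binom{k}{2} v^{-1}$ for $v$ large, since the choices are distinct integers in $[v-1]$) \emph{and} $\din{v} = 0$ (probability $\sim (v/n)^k$). Thus $\E[X_n] \sim \sum_{v} \binom{k}{2} v^{-1} (v/n)^k = \binom{k}{2} n^{-k}\sum_v v^{k-1} \sim \binom{k}{2}\cdot \frac{1}{k} = \frac{k-1}{2}$. Good — this is the mechanism.

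For the higher factorial moments I would show that for distinct $v_1, \dots, v_m$ the events ``$D(v_i) = k-1$'' are asymptotically independent: the out-collision events involve disjoint sets of choice-variables, and the in-degree-zero events, while not independent, have joint probability $\sim \prod (v_i/n)^k$ up to $1 + o(1)$ factors (the correction from forbidding all of $v_1,\dots,v_m$ as targets simultaneously is negligible since $m$ is fixed and the dominant $v_i$ are of order $n$). One must also confirm the contribution of small $v_i$ is negligible in these sums, and that the diagonal/overlap terms (two of the $v_i$ choosing each other, or near-coincident indices) don't contribute — standard but requires care. The main obstacle I anticipate is the lower-bound direction $\delta_n \geq k-1$ whp: one must rule out \emph{all} the ways a vertex can end up with degree $\leq k-2$ (combinations of multiple out-collisions and small in-degree across the whole range of $v$, including the delicate smallest vertices), and the union bound there, though conceptually routine, is where the bookkeeping is heaviest; the Poisson-moment computation for the exact value $k-1$, by contrast, is clean once the right contributing configuration is identified.
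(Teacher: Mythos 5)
Your proposal is correct and takes essentially the same route as the paper: you isolate the unique contributing configuration ($\dout{v}=k-1$ via exactly one out-collision together with $\din{v}=0$), compute $\E[X_n]\sim\binom{k}{2}\cdot\frac1k=\frac{k-1}{2}$, and invoke the method of factorial moments for Poisson convergence, exactly as the paper does via its binomial-moment computation. Your handling of the lower bound $\delta_n\ge k-1$ likewise matches the paper's two lemmas (vertices below $\log n$ acquire in-degree at least $k$ by a Chernoff bound; larger vertices suffer at most one out-collision by a union bound with probability $O(x^{-2})$ per vertex).
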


Next we study the connectivity of $G_{n,k}$.
Given a graph $G$,  let  $\kappa(G)$ denote  the connectivity of $G$, that is, $\kappa(G)$ is the largest $\nu$ such that deletion of any $\nu-1$ vertices does not
separate $G$. Let $\kappa_n=\kappa(G_{n,k})$.

\begin{theorem}\label{thm:connectivity}
For $k\ge 2$,
\[
\pr(\kappa_n=\delta_n)=1-O(n^{-1/k}).
\]
\end{theorem}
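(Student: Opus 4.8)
The plan is to show that whp every vertex cut of size $\delta_n$ must be the neighborhood of a single vertex of minimum degree, so that $\kappa_n \ge \delta_n$ (the reverse inequality is trivial). Suppose $S \subseteq [n]$ with $|S| \le \delta_n \le k$ separates $G_{n,k}$ into parts $A$ and $B$ with $A \cup B \cup S = [n]$, $A, B \ne \emptyset$ and no edges between $A$ and $B$. First I would dispose of the ``trivial'' cuts: if one of $A, B$ is a single vertex $v$, then $S \supseteq N(v)$ and $|S| \le \delta_n \le D(v)$ forces $S = N(v)$ and $D(v) = \delta_n$, which is exactly the allowed case. So from now on assume $|A|, |B| \ge 2$, and the goal is to prove this happens with probability $O(n^{-1/k})$.

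The key structural observation is about the oldest vertices in the picture. Let $w$ be the smallest-indexed vertex in $A \cup B$, say $w \in A$, and let $b$ be the smallest-indexed vertex of $B$. Every vertex $u \in B$ with $u > b$ sends its $k$ selections into $[u-1]$; none of these may land in $A$ (no $A$--$B$ edges), and they can land in $S$ or in $B \cap [u-1]$. Crucially, all $k$ out-edges of the second-smallest vertex of $B$ must go to $S \cup \{b\}$, a set of size at most $k+1 \le \delta_n + 1$; more generally this severely constrains how $B$ can be ``born.'' The cleanest way to package this: run the attachment process and, for a candidate triple $(A, B, S)$, bound the probability that no edge crosses between $A$ and $B$. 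A vertex $u$ contributes a factor at most $\big(1 - |A \cap [u-1]|/(u-1)\big)^k$ if $u \in B$ (and symmetrically), so the probability of the cut is at most
\[
\prod_{u \in B,\, u > b} \left(1 - \frac{|A \cap [u-1]|}{u-1}\right)^{k} \cdot \prod_{u \in A,\, u > w} \left(1 - \frac{|B \cap [u-1]|}{u-1}\right)^{k}.
\]
Then I would sum this over all choices of $S$ (at most $\sum_{j\le k}\binom{n}{j} = O(n^k)$ of them) and over all 2-colorings $A, B$ consistent with a given $S$. The main work is to show that for the sum to be non-negligible, $\min(|A|,|B|)$ must be bounded, and in fact that the ``small side'' essentially has to be a single low-degree vertex, which lands us back in the trivial case; the leftover genuinely non-trivial configurations contribute $O(n^{-1/k})$.

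To make the union bound converge I would split on the size $s = \min(|A|, |B|)$ of the small side and on the location $v_0$ of its smallest-indexed vertex. For the small side to be cut off, that smallest vertex $v_0$ of the small side together with $S$ must absorb all backward edges of the next $s-1$ vertices of the small side; since each such vertex picks $k$ times from $[u-1]$, and must avoid a set of density $\ge 1 - (|S|+s)/v_0$ among older vertices when $v_0$ is not too small, one gets a bound like $\binom{n}{s}\big((|S|+s)/v_0\big)^{k(s-1)}$ summed against $O(n^k)$ choices of $S$ and $n$ choices of $v_0$. The case $s = 1$ is the trivial cut already handled (it needs $D(v_0) = \delta_n$, an event of constant probability, but then $S = N(v_0)$ is forced and $\kappa_n \ge \delta_n$ still holds); the dangerous regime is $v_0$ small — say $v_0 \le n^{1/k}$ or so — together with $s \ge 2$, and bounding its probability by $O(n^{-1/k})$ is exactly where the exponent $n^{-1/k}$ in the statement comes from. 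For $v_0$ large, the density factor is close to $1$ and the $k(s-1) \ge k$ power combined with crude counting kills the sum.

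The main obstacle is handling the ``old small side'' case cleanly: when the small side contains very old vertices (small indices), the naive density bound $1 - |A\cap[u-1]|/(u-1)$ is close to $1$ for the first few vertices and gives nothing, so one has to argue more carefully — essentially that an old vertex $v_0$ can be separated off with its first few successors only if an unlikely coincidence among $O(1)$ vertices occurs, and there are only $O(n^{1/k})$-many relevant such $v_0$ after optimizing the cutoff, contributing the stated $O(n^{-1/k})$. Getting the bookkeeping of $|S| \le k$, the index cutoff for $v_0$, and the exponent $k(s-1)$ to balance out to precisely $n^{-1/k}$ is the delicate part; everything else is a routine, if slightly lengthy, first-moment computation.
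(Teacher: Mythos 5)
Your setup is sound and matches the paper's starting point: the reduction to non-trivial cuts, the union bound over partitions $A\sqcup B\sqcup S$, and the displayed product bounding the probability of a given cut are all correct (the paper's inequality \eqref{P(Un)<X1,X2,X3} is exactly your product, rewritten as ``each $x$ in its own side selects only from its side together with $S$''). The gap appears when you pass from that product to the quantity you actually propose to sum, namely $\binom{n}{s}\bigl((|S|+s)/v_0\bigr)^{k(s-1)}$ against $O(n^{k})$ choices of $S$ and $n$ choices of $v_0$. This bound retains only the out-selection constraints of the $s-1$ non-minimal vertices of the small side and discards the other half of your own product: the requirement that no big-side vertex ever \emph{selects} a small-side vertex. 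Without that in-selection factor the union bound provably does not close. For instance, with $s=2$ and both small-side vertices of index $\Theta(n)$, your summand is $n^{|S|}\cdot n\cdot n\cdot O(n^{-k})=O(n^{|S|+2-k})$, which is $\Omega(n)$ already for $|S|=k-1$. The paper's computation works precisely because each small-side vertex $x$ contributes \emph{both} an out-factor $\approx x^{-(k-\text{internal edges})}$ \emph{and} an in-factor $\approx (x/n)^{k}$ (its event $\mathcal A_2$); their product is $\approx n^{-(k-1)}$ essentially uniformly in $x$, which is what beats the $n^{|S|}$ choices of $S$.

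Relatedly, your identification of the dangerous regime is inverted. Small $v_0$ is the \emph{easy} case: the probability that no later big-side vertex selects $v_0$ is about $(v_0/n)^{k}$, which is minuscule for $v_0\le n^{1/k}$ (and Lemma~\ref{big in-degrees} shows the very oldest vertices have in-degree $\ge k$ outright). The critical configurations are small sides consisting of one or two vertices of \emph{large} index: in the paper's bookkeeping, a small-side vertex in the interval $(n^{(j-1)/k},n^{j/k}]$ contributes $n^{j/k}$ choices against probability $n^{-(j-1)}\cdot n^{-(j-k)\cdot(-1)}$, and the worst balance, $j=k-1$, is what produces the exponent $-1/k$; the extreme case $j=k$ (all small-side vertices above $n^{(k-1)/k}$) is not killed by this counting at all and needs a separate ad hoc argument exploiting the conditioning on $\delta_n$ (e.g.\ that when $\delta_n=k$ and $|S|=k-1$ the two small-side vertices must be adjacent and make $2k-1$ selections into $S$). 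Your plan never distinguishes $|S|\le k-2$ from $|S|\le k-1$ according to the value of $\delta_n$, and without that extra structure the large-index one- and two-vertex cases do not sum to $O(n^{-1/k})$. So the proposal as written would not go through; it needs the in-selection constraint reinstated in the quantitative bound and a dedicated treatment of the large-index small sides.
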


Our next result shows that $G_{n,k}$ is an expander.
Given a graph $G=(V,E)$ and a vertex set $X\subseteq V$, let $N(X)$ be the set of all neighbors of $X$
outside of $X$. Since the minimum degree of $G_{n,k}$ is at least $k-1$ whp by Theorem~\ref{thm:mindeg}, for every non-empty vertex set $X$ in $G_{n,k}$, we have $|X|+|N(X)|\ge k$ whp. 
Let $\rho(k)$ be the positive root of
\begin{align*}
&\qquad\qquad\qquad f(\rho)+kg(\rho)=0,\\
&f(\rho):=\log 2-\frac{1}{2}\bigl[\rho\log \rho+(1-\rho)\log(1-\rho)\bigr]\\
&g(\rho):=\frac{1+\rho}{2}\log(1+\rho)-\rho\log \rho-\left(1-\frac{\rho}{2}\right)\log(2-\rho)=0.
\end{align*}
The root $\rho(k)$ uniquely exists for all $k\ge 2$, and $\lim\rho(k)=\rho^*$, which is the unique root of $g(\rho)=0$.  Maple delivers $\rho(2)\approx 0.076$, $\rho(3)\approx 0.114$, $\rho(4)\approx 0.141$, $\rho(5)\approx 0.160$, $\rho(6)\approx 0.173$,
$\rho(7)\approx 0.183$, and $\rho^*\approx 0.252$.

\begin{theorem}\label{expansion} Let $k\ge 2$. For every $\rho<\rho(k)$,  
\begin{equation*}
\lim_{n\to\infty}\!\pr\!\left\{\exists\, X\subseteq [n] : 1\le |X|\le n/2\text{ and }|N(X)|\le\rho |X|\right\}=0.
\end{equation*}
In words, whp every non-empty vertex set $X$ of cardinality $\le n/2$  has at least $\rho |X|$ neighbors outside of $X$, so that
the graph $G_{n,k}$ is a vertex-expander with expansion rate $\rho$, at least.
\end{theorem}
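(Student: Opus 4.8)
The plan is a first‑moment argument carried out in the directed model $\dgnk$; since orientations and multiplicities do not affect neighbourhoods, $N(X)$ is the same in $\dgnk$ and in $G_{n,k}$. If $|X|=a\le n/2$ and $|N(X)|\le\rho a$ then, $|N(X)|$ being an integer, $N(X)\subseteq Y$ for some $Y$ disjoint from $X$ with $|Y|=b:=\lfloor\rho a\rfloor$, and with $Z:=[n]\setminus(X\cup Y)$, of size $\ge(1-\rho)n/2$, no edge of $\dgnk$ joins $X$ and $Z$. Hence the event in the theorem is contained in $\bigcup_{a=1}^{\lfloor n/2\rfloor}\bigcup_{(X,Y)}E(X,Y)$, where the inner union runs over the $\binom na\binom{n-a}b$ pairs of disjoint sets $(X,Y)$ with $|X|=a$, $|Y|=b$, and $E(X,Y):=\{\text{no edge of }\dgnk\text{ joins }X\text{ and }Z\}$.

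First I would record $\pr(E(X,Y))$ exactly. Ordering vertices by birth time, the $k$ choices of vertex $w$ are i.i.d.\ uniform on $[w-1]$, and $E(X,Y)$ holds iff each $w\in X$ sends all $k$ choices into $X\cup Y$ and each $w\in Z$ sends all $k$ choices outside $X$ (vertices of $Y$ being free); hence, with $x_w:=|X\cap[w-1]|$ and $z_w:=|Z\cap[w-1]|$,
\[
\pr\bigl(E(X,Y)\bigr)=\prod_{w\in X}\Bigl(1-\tfrac{z_w}{w-1}\Bigr)^{k}\prod_{w\in Z}\Bigl(1-\tfrac{x_w}{w-1}\Bigr)^{k}
\]
(the factor at $w=1$ being $1$). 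For the union bound I would use $\sum_{(X,Y)}\pr(E(X,Y))\le\binom na\binom{n-a}b\cdot\max\pr(E(X,Y))$, the maximum being over all interleavings of the birth times of the three classes with the prescribed sizes; locating this maximizing interleaving is the crux, since the product above depends on the whole interleaving and not just on $a$ and $b$.

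I would split the range of $a$. For $a$ below a small constant times $n$ a soft estimate suffices: for any interleaving at least one of the two products is $\le\exp\bigl(-(1-o(1))ka\log(n/a)\bigr)$ — roughly, if many vertices of $X$ are born after many vertices of $Z$ the first product is tiny, and otherwise $X$ is essentially an initial segment and the second product is tiny — while $\binom na\binom{n-a}b\le\exp\bigl((1+\rho+o(1))a\log(n/a)\bigr)$; as $k\ge2>1+\rho$, each such $a$ contributes $\le\exp\bigl(-(k-1-\rho-o(1))a\log(n/a)\bigr)$, and the sum over $a$ is $o(1)$ (for the smallest $a$ one may instead simply invoke Theorems~\ref{thm:mindeg} and~\ref{thm:connectivity}). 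For $a=\lfloor\alpha n\rfloor$ with $\alpha$ bounded away from $0$, Stirling turns $\tfrac1n\log\binom na\binom{n-a}b$ into the ternary entropy $-\alpha\log\alpha-\beta\log\beta-\gamma\log\gamma$, $\beta=\rho\alpha$, $\gamma=1-\alpha-\beta$, and the extremal‑interleaving problem contributes $k$ times an explicit function of $\alpha$ and $\rho$; at $\alpha=\tfrac12$ one sees directly that the entropy equals $-\tfrac12\log\tfrac12-\tfrac\rho2\log\tfrac\rho2-\tfrac{1-\rho}2\log\tfrac{1-\rho}2=f(\rho)$ and — this is the heart of the computation — the probability exponent equals $kg(\rho)$. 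One checks that $\alpha=\tfrac12$ is the most restrictive size, so that the exponent of the $\alpha n$‑term is negative for all $\alpha\in(0,\tfrac12]$ exactly when $f(\rho)+kg(\rho)<0$, i.e.\ when $\rho<\rho(k)$; combining the two ranges, the union bound tends to $0$.

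The main obstacle is the extremal‑interleaving problem for $\alpha$ of order $1$: maximizing $\prod_{w\in X}(1-\tfrac{z_w}{w-1})^k\prod_{w\in Z}(1-\tfrac{x_w}{w-1})^k$ over interleavings — equivalently a calculus‑of‑variations problem for the limiting densities $\xi(t),\zeta(t)$ of $X$ and $Z$, whose Euler–Lagrange equations are non‑local because the integrand involves the running averages $\tfrac1t\int_0^t\xi$ and $\tfrac1t\int_0^t\zeta$ — together with identifying its value as $kg(\rho)$ at $\alpha=\tfrac12$ and verifying that $\alpha=\tfrac12$ is indeed the worst size. The remaining analytic points — that $\rho(k)$ exists and is unique for each $k\ge2$, is increasing in $k$, and satisfies $\rho(k)\to\rho^*$ — are then routine, using that $f>0$ and that $f$ and $g$ are increasing on $(0,\tfrac12)$ (indeed $g'(\rho)=\tfrac12\log\tfrac{(1+\rho)(2-\rho)}{\rho^2}>0$ there) with $g(\rho^*)=0$.
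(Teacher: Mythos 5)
Your setup coincides with the paper's: a union bound over pairs $(X,Y)$ with $Y\supseteq N(X)$, $|Y|=\lfloor\rho|X|\rfloor$, the exact product formula for $\pr(E(X,Y))$, a separate soft treatment of small $|X|$, and an entropy-versus-exponent comparison at $|X|=n/2$ that produces $f(\rho)+kg(\rho)<0$. But the step you yourself flag as ``the crux'' and ``the main obstacle'' --- controlling $\max_{(X,Y)}\pr(E(X,Y))$ over all interleavings of the birth times --- is precisely the part you do not carry out, and as proposed (a non-local calculus-of-variations problem for the limiting densities of $X$ and $Z$, followed by a verification that its value at $\alpha=1/2$ is $kg(\rho)$ and that $\alpha=1/2$ is extremal) it is a genuine gap, not a routine computation. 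The same issue infects the small-$a$ range: the claimed dichotomy ``for any interleaving at least one of the two products is $\le\exp(-(1-o(1))ka\log(n/a))$'' is plausible but is asserted, not proved.

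The paper sidesteps the extremal-interleaving problem entirely with a rank/telescoping device, and this is the idea your proposal is missing. Writing $r(x;S)$ for the rank of $x$ in $S$ from the top, the number of admissible targets for $x\in X$ is $|X|+|Y|-r(x;X\sqcup Y)\le |X|+|Y|-r(x;X)$, and as $x$ runs through $X$ the product of these upper bounds telescopes to $(\mu+\nu-1)!/\nu!$ with $\mu=|X|$, $\nu=|Y|$, \emph{independently of the interleaving}; the analogous bound for $z\in Z$ gives $(n-\mu-1)!/\nu!$, and the denominators satisfy $\prod_{x\in X}x\cdot\prod_{z\in Z}z=n!/\prod_{y\in Y}y\ge (n-\nu)!$. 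This reduces the whole union bound to a two-parameter sum in $(\mu,\nu)$, to which Stirling and a convexity argument (showing $K''>0$, $K(0)=0$, $K((1+\rho)/2)<0$) apply directly; the variational problem never arises. If you want to complete your route you would either have to solve your Euler--Lagrange problem and separately handle all finite sizes $a$ uniformly, or (much more simply) adopt a monotone rearrangement bound of this telescoping type. As it stands, the proposal is an accurate road map of the difficulty rather than a proof.
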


Now the edge-expansion rate (conductance) $\Phi$ of a graph $G(V,E)$ is defined as $\min_{S\subset V}\Phi(S)$, where
\[
\Phi(S):=\frac{|\nabla(S)|\cdot |E|}{d(S)\cdot d(V\!\setminus\! S)},\quad d(T):=\sum_{v\in T}\text{deg}_G(v),
\]
and $\nabla(S)$ is the set of edges connecting $S$ to $V\setminus S$. We will prove existence of a constant $c$ such that
whp $d(S)\le c |S|\bigl(1+\log (n/|S|)\bigr)$ for all non-empty $S\subset V$. Applying Theorem \ref{expansion}, we get that 
there is an absolute constant $\gamma$ such that whp $\Phi(G_{n,k})\ge \gamma \log^{-1} n$.  Let $\{P^t(j\,|\,i)\}_{j\in [n]}$ denote the probability that the uniformly random walk on $[V]$ is in vertex $j$ at time $t$, given the starting state $i\in [n]$, and let $\{\pi(j)=(2|E|)^{-1}\text{deg}_G(j)\}_{j\in [n]}$ denote the stationary distribution of the walk. According to Sinclair--Jerrum
theorem \cite{SinJer} for a general graph $G$, and the likely lower bound for conductance $\Phi(G_{n,k})$, we have

\begin{theorem}\label{walk} 
For the random walk on $G_{n,k}$, there are some constants $\ga_1$ and $\ga_2$ such that
\[
\Bigl|P^t(j\,|\.i)-\pi(j)\Bigr|\le \sqrt{\frac{\pi(j)}{\pi(i)}}\left(1-\frac{\Phi^2(G_{n,k})}{8}\right)^t\le \gamma_1(\log n)^{1/2} \exp\left(-\frac{\gamma_2\,t}{\log^2 n}\right),
\]
the last bound holding with high probability.
\end{theorem}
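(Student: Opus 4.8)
The strategy is to treat the two displayed inequalities separately. The left-hand one,
\[
\Bigl|P^t(j\,|\,i)-\pi(j)\Bigr|\le \sqrt{\pi(j)/\pi(i)}\,\bigl(1-\Phi^2(G_{n,k})/8\bigr)^t,
\]
is a purely deterministic fact about the uniform walk on a connected graph, namely the conductance bound of Sinclair and Jerrum \cite{SinJer}; the only probabilistic input here is that $G_{n,k}$ is connected whp (Theorem~\ref{thm:connectivity}). So it remains to establish two whp estimates that upgrade the middle quantity to the right-hand bound: (a) a conductance lower bound $\Phi(G_{n,k})\ge \gamma\log^{-1}n$ for some constant $\gamma>0$, and (b) $\pi(j)/\pi(i)\le \gamma_1^2\log n$ for all $i,j$. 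Given these, $(1-\Phi^2/8)^t\le \exp(-\Phi^2 t/8)\le \exp(-\gamma^2 t/(8\log^2 n))$, and the theorem follows with $\gamma_2=\gamma^2/8$.

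Estimate (b) is immediate: $\pi(v)=D(v)/(2|E|)$, so $\pi(j)/\pi(i)=D(j)/D(i)$, and Theorem~\ref{thm:mindeg} (minimum degree $\ge k-1$ whp) together with the already-quoted estimate $\max_v D(v)\sim k\log n$ whp give $\pi(j)/\pi(i)\le(1+o(1))\tfrac{k}{k-1}\log n$ whp. For (a) I would fix $\rho\in(0,\rho(k))$ and combine Theorem~\ref{expansion} with the degree-sum bound promised just before Theorem~\ref{walk}: whp $d(S)\le c\,|S|\bigl(1+\log(n/|S|)\bigr)$ for every non-empty $S\subset[n]$. Then for every $S$ with $1\le|S|\le n/2$ one has $|\nabla(S)|\ge|N(S)|\ge\rho|S|$ (distinct crossing edges to distinct outside neighbors) and $d(V\setminus S)\le d(V)=2|E|$, whence
\[
\Phi(S)=\frac{|\nabla(S)|\,|E|}{d(S)\,d(V\setminus S)}\ge\frac{|\nabla(S)|}{2\,d(S)}\ge\frac{\rho\,|S|}{2c\,|S|\,(1+\log n)}=\frac{\rho}{2c\,(1+\log n)}.
\]
Since $\Phi(S)=\Phi(V\setminus S)$ this covers every $S$, so whp $\Phi(G_{n,k})\ge\gamma\log^{-1}n$ with, say, $\gamma=\rho/(4c)$ for $n$ large.

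The one substantial step --- and the main obstacle --- is the uniform degree-sum bound, since a naive union bound over all $2^n$ sets $S$ is hopeless. The remedy is a stochastic-domination reduction to initial segments. Put $s=|S|$. In the directed multigraph, $d(S)$ is stochastically at most $ks+\sum_{u=2}^n\bin\!\bigl(k,\,|S\cap[u-1]|/(u-1)\bigr)$ with independent binomials, because $\sum_{v\in S}\dout{v}\le ks$ and de-duplication only decreases degrees; and since $|S\cap[u-1]|\le\min(s,u-1)$ --- the value attained when $S=[s]$ --- this is in turn dominated by $ks+Z_s$, where $Z_s:=\sum_{u=2}^n\bin\!\bigl(k,\min(s,u-1)/(u-1)\bigr)$ depends only on $s$ and satisfies $\E Z_s=k\sum_{u=2}^n\min(s,u-1)/(u-1)\asymp k\,s\bigl(1+\log(n/s)\bigr)$. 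Hence, for $c$ a sufficiently large multiple of $k$, a Chernoff bound makes $\pr\bigl(ks+Z_s>c\,s(1+\log(n/s))\bigr)$ exponentially smaller than $\binom ns^{-1}$, uniformly in $1\le s\le n/2$, and a union bound over the at most $n$ values of $s$ finishes the claim (the range $|S|>n/2$ being unnecessary by the symmetry $\Phi(S)=\Phi(V\setminus S)$). The only delicate point is keeping the Chernoff exponent above $\log\binom ns=\Theta\bigl(s(1+\log(n/s))\bigr)$ for all such $s$, the borderline case being $s$ of order $n$ --- which is exactly why it is convenient to take $c$ large and to restrict to $|S|\le n/2$. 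Everything else is routine bookkeeping.
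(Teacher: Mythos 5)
Your proposal is correct, and its overall architecture coincides with the paper's: quote the Sinclair--Jerrum bound for the first inequality, bound $\sqrt{\pi(j)/\pi(i)}=\sqrt{D(j)/D(i)}$ by $O(\sqrt{\log n})$ using the minimum- and maximum-degree estimates, and deduce $\Phi(G_{n,k})\ge\gamma\log^{-1}n$ by combining Theorem~\ref{expansion} with a uniform degree-sum bound $d(S)\le_b |S|\bigl(1+\log(n/|S|)\bigr)$ — this is exactly the content of Lemma~\ref{d(X)} and Corollary~\ref{Phi(Gnk)>}. Where you genuinely diverge is in the proof of that degree-sum lemma. The paper bounds $\sum_{x\in S}\din{x}$ by the sum of the $|S|$ largest in-degrees $D_1\ge D_2\ge\cdots$, invokes negative association of the in-degrees (balls-and-bins, Dubhashi--Ranjan) to get $\pr(D_j\ge d)\le\frac{1}{j!}\bigl(\sum_x\pr(\din{x}\ge d)\bigr)^j$, and optimizes an exponential-moment bound on $\pr(\din{x}\ge d)$ to choose $d_j=\Theta(\log(an/j))$; summing $d_1+\cdots+d_{|S|}$ yields the lemma with failure probability $O(n^{-2}\log n)$. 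You instead fix $S$, write the multigraph in-degree sum as the independent sum $\sum_{u}\bin\bigl(k,|S\cap[u-1]|/(u-1)\bigr)$, dominate it by the initial-segment configuration $Z_{|S|}$, and beat $\binom{n}{|S|}$ with a multiplicative Chernoff bound. Both arguments are sound; yours is more elementary (no negative association, no order statistics), while the paper's gives the stronger by-product that the sum of the top $|S|$ in-degrees — not merely the in-degree sum of each fixed $S$ — is $O\bigl(|S|\log(n/|S|)+|S|\bigr)$ whp. Two small points of care: the tail bound you need is the multiplicative form $\pr(Z\ge a\mu)\le (e/a)^{a\mu}$ rather than \eqref{C3}, whose range $\eps\le 3/2$ is too narrow once $c/k$ is large; and your remark that the naive union bound is ``hopeless'' overstates the difficulty — your own argument is precisely a union bound over all $\binom{n}{s}$ sets of each size, made tractable because the per-set tail is uniform in $S$ given $s=|S|$.
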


We turn now to formulation of the results regarding the bootstrap percolation on $G_{n,k}$.

\begin{theorem}\label{thm:percolation1} 
Let $\log^{(s)}$ stand for the $s$-fold composition of $\log$ with itself.
If 
\[
p\ge \omega (\log n)^{-r/(r-1)},
\]
where $\omega=\bigl(3\log^{(3)} n \cdot \log^{(4)}n)^{\frac{r}{r-1}}$, then all the vertices get infected eventually whp.
\end{theorem}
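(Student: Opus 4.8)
\medskip
\noindent\textbf{Proof proposal.}
Assume $2\le r\le k-1$, so $k\ge 3$. By monotonicity of bootstrap percolation, performing infections in any legal order can only underestimate the final infected set; it therefore suffices to describe one order that reaches all of $[n]$ whp. The argument splits into two parts: (A)~whp the process infects an entire initial segment $[1,t_0]$ with $t_0=t_0(n)\to\infty$, and (B)~given (A), the infection then spreads to $[n]$. Part (B) is short. For $u>t_0$ let $G_u$ be the event that the $k$ choices of $u$ in $[u-1]$ land on at least $r$ distinct vertices; since $k\ge r+1$, a union bound over the $(r-1)$-subsets of $[u-1]$ gives $\pr(G_u^{\,c})\le\binom{u-1}{r-1}\big((r-1)/(u-1)\big)^k=O(u^{-2})$, so $\pr\big(\bigcap_{u>t_0}G_u\big)\ge 1-O(1/t_0)\to1$. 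On $\bigcap_{u>t_0}G_u$ we induct: if $[1,m]$ is wholly infected and $m\ge t_0$, then $u=m+1$ has $\ge r$ distinct neighbors in $[m]$, all infected, so $u$ becomes infected; hence $[1,n]$ is infected. So it remains to establish (A).

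For (A) we amplify the infection across telescoping scales. Fix an integer $J=J(n)=\Theta(\log^{(3)}n\cdot\log^{(4)}n)$ (its constant is pinned down below), set $m_j=\ceil{n^{(J-j)/J}}$ for $0\le j\le J$ (so $m_0=n,\ m_J=1$) and $B_j=(m_{j+1},m_j]$ for $0\le j\le J-1$. If $v\in B_{j+1}$ then $v\le m_{j+1}<w$ for every $w\in B_j$, so the neighbors of $v$ inside $B_j$ are exactly its in-neighbors $\{w\in B_j:\,w\text{ chose }v\}$; their number is a sum of independent Bernoullis with mean $\sum_{w\in B_j}\big(1-(1-\tfrac1{w-1})^k\big)\sim k\log(m_j/m_{j+1})\sim k(\log n)/J$, the same for all $v\le m_{j+1}$, and conditioning on its value one sees that the chance it is $<r$ is only $n^{-\Theta(1/J)}$. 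The structural point is that in the slab-by-slab process the infection status of a vertex of $B_j$ depends only on the initial infections and on the choices made inside $B_0\cup\dots\cup B_{j-1}$, hence is independent of the choices made inside $B_j$; in particular the events ``$w$ chose $v$'' ($w\in B_j,\,v\in B_{j+1}$) are independent of the infection statuses in $B_j$. This gives a one-step amplification lemma: if each vertex of $B_j$ is --- conditionally on everything revealed so far --- infected with probability $\ge q_j$, then after one more round each vertex of $B_{j+1}$ is conditionally infected with probability $\ge q_{j+1}$, where $q_{j+1}:=\min\{c_1(Nq_j)^r,\tfrac12\}$ with $N:=(1-\eta)k(\log n)/J$ (any fixed small $\eta>0$) and $c_1=c_1(r)>0$ absolute; indeed the number of infected $B_j$-neighbors of $v\in B_{j+1}$ stochastically dominates $\bin(N,q_j)$, and $\pr(\bin(N,q_j)\ge r)\ge c_1\min\{(Nq_j)^r,1\}\ge q_{j+1}$ --- here one also uses that the infected set of $B_j$ and the in-neighborhoods are both skewed towards the \emph{older} end of $B_j$, so $q_j$ is a legitimate per-edge lower bound. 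A second-moment estimate (exploiting negative association) for the number of infected vertices of $B_{j+1}$ shows the hypothesis propagates whp.

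Iterate from $q_0=p$. While $q_j\le\tfrac12$ the recursion is $x_{j+1}-x^*=r(x_j-x^*)$ with $x_j=-\log q_j$ and $x^*=\tfrac{\log c_1}{r-1}+\tfrac r{r-1}\log\!\big((1-\eta)k(\log n)/J\big)$, so $x_j-x^*=r^j(x_0-x^*)$, and the iteration escapes (reaches $q_{j_0}\ge\tfrac12$ for some $j_0$) iff $x_0<x^*$, i.e.\ iff $p$ exceeds the threshold $e^{-x^*}=\Theta\big((k\log n/J)^{-r/(r-1)}\big)$. Choosing the constant in $J$ so that $J\le 3c_1^{1/r}k\log^{(3)}n\log^{(4)}n$, the hypothesis $p\ge\omega(\log n)^{-r/(r-1)}$ with $\omega=(3\log^{(3)}n\cdot\log^{(4)}n)^{r/(r-1)}$ forces $x^*-x_0\ge\tfrac{r}{r-1}\log(3kc_1^{1/r})-o(1)>0$ --- this strict inequality is exactly what the constant $3$ buys, using $k\ge3$ --- while $x^*=\Theta(\log^{(2)}n)$; hence $r^{j_0}=\Theta\big(x^*/(x^*-x_0)\big)$ gives $j_0=\Theta(\log^{(3)}n)\ll J$. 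Since $N\to\infty$, one further round pushes the conditional infection probability of $B_{j_0+1}$, and then of every slab up to $B_{J-k}$, above $1-n^{-\Theta(1/J)}$ (because $\pr(\bin(N,p')<r)$ is dominated by the chance of $<r$ in-neighbors, which is $n^{-\Theta(1/J)}$, whenever $p'\ge\tfrac12$, hence also once $p'\ge1-n^{-\Theta(1/J)}$). Finally we upgrade the last $k-1$ slabs to \emph{complete} infection: for $v\in B_{J-\ell}$ with $1\le\ell\le k-1$ the chance of fewer than $r$ infected in-neighbors in $B_{J-\ell-1}$ is $\le n^{-(k-o(1))/J}$, and since $|B_{J-\ell}|\le n^{\ell/J}$ with $\ell<k$, the union bound over $B_{J-\ell}$ is $o(1)$; chaining $\ell=k,k-1,\dots,1$ (starting from $B_{J-k}$ at density $\ge1-n^{-\Theta(1/J)}$) infects all of $B_{J-1}\cup\dots\cup B_{J-k+1}$, and then vertex $1$ (whp it has $\gg r$ infected neighbors there). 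Thus whp the segment $[1,m_{J-k+1}]$ is completely infected and $m_{J-k+1}=n^{(k-1)/J}\to\infty$, which supplies the $t_0$ of part (A).

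The main obstacle is the one-step amplification lemma. The binomial inequality $\pr(\bin(N,q)\ge r)\ge c_1\min\{(Nq)^r,1\}$ is routine; the delicate part is dependency control. Infection statuses within a slab are correlated, so one must carry a per-vertex \emph{conditional} lower bound rather than a single density and check that it survives passing from $B_j$ to $B_{j+1}$ --- the monotonicity of bootstrap percolation and of the underlying indicator events is what makes this possible. One must also align the event ``$v$ has many in-neighbors in $B_j$'' with the infection density of $B_j$ in a way that respects that both are skewed towards the older vertices of $B_j$ (so that the recursion constant is genuinely $c_1$), and quantify the negative association needed for the slab-level concentration. Threading the constants $c_1$, $\eta$ and the implied constant in $J$ so that the escape threshold lands precisely at $\omega(\log n)^{-r/(r-1)}$ is the last bookkeeping step; none of this affects the structure, but it is where the real work lies.
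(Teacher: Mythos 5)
Your overall strategy coincides with the paper's: both arguments cut the vertex set into geometrically spaced intervals $(n^{(j-1)/\nu},n^{j/\nu}]$ so that each vertex has about $k\log n/\nu$ in-neighbours in the adjacent younger interval, both run the amplification recursion $q_{j+1}\approx c(Nq_j)^{r}$ with $N=\Theta(\log n/\nu)$, and both extract the same escape threshold $\Theta\bigl((\log n/\nu)^{-r/(r-1)}\bigr)$ with $\Theta(\log^{(3)}n)$ rounds to escape; your Part~(B) is the paper's Lemma~5.1 almost verbatim. The difference --- and the gap --- is in how the recursion is justified.

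The paper does not propagate an infection density through whole slabs. For each root $x\in[m]$ it first constructs a perfect $\ell$-ary tree of height $\nu$ whose $j$-th generation lies in $\mathcal I_j$ and whose children sets are pairwise \emph{disjoint} (built greedily, discarding excess selectors, succeeding with probability $1-e^{-c\ell/3}$). Because the subtrees hanging off distinct vertices are vertex-disjoint and the initial infection is independent of the graph, the infection indicators within a generation are genuinely conditionally i.i.d., and $p_{j-1}=\pr(\bin(\ell,p_j)\ge r)$ is an exact identity. Your amplification lemma has no such independence to lean on: the infection statuses of two vertices $w,w'\in B_j$ are \emph{positively} correlated (they share the randomness of $B_0\cup\dots\cup B_{j-1}$ and possibly common infected ancestors), and positive correlation is exactly the failure mode for the step $\pr(|S_j\cap T_v|\ge r)\ge c_1(Nq_j)^r$ --- in the extreme where $S_j$ equals all of $B_j$ with probability $q_j$ and is empty otherwise, the left-hand side is only $q_j$ and the amplification dies. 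Your proposed fixes do not close this: ``negative association'' applies to the in-neighbourhoods (a balls-in-bins statement), not to the infection statuses; and ``conditionally on everything revealed so far, infected with probability $\ge q_j$'' is vacuous, since the infection status of a $B_j$-vertex is a deterministic function of the choices made in $B_0\cup\dots\cup B_{j-1}$ and of $A_0$, so that conditional probability is $0$ or $1$. A second-moment bound on $|S_{j+1}|$ could in principle tame the correlations (witness trees of two vertices rarely collide), but that estimate, iterated over $\Theta(\log^{(3)}n)$ rounds with accumulating error, is precisely the content you have deferred as ``where the real work lies.'' The paper's disjoint-tree construction is the missing idea that makes the recursion rigorous at essentially no extra cost, and I would regard your writeup as incomplete without it or an equivalent decoupling device.
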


A key element of the argument is the whp existence proof for perfect $\ell$-ary trees rooted at vertices from a slowly growing interval of ``smallish'' vertices, with generations coming from a partition of all other vertices into a slowly growing number of intervals, and leaves from the set of initially infected vertices.

\begin{theorem}\label{thm:percolation2} 
Let $\omega\to\infty$ however slowly. If
\begin{align*}
p\le \omega^{-1}(\log n)^{-r/(r-1)}, 
\end{align*}
then, whp, spread of infection stops before any vertex smaller than $\omega^{1/2}(\log n)^{r/(r-1)}$ is infected.
\end{theorem}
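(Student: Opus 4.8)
The plan is to show that, under the hypothesis $p \le \omega^{-1}(\log n)^{-r/(r-1)}$, the infection cannot reach any vertex in the initial segment $[1, v_0]$ with $v_0 := \omega^{1/2}(\log n)^{r/(r-1)}$. The natural approach is a first-moment (union bound) argument over the possible ``infection witnesses.'' If some vertex $v \le v_0$ eventually becomes infected, then there is a minimal subtree-like structure certifying this: a rooted tree $T$ whose root is the first vertex $\le v_0$ to get infected, whose leaves are all initially infected, and in which every internal node has at least $r$ children, each of which was infected strictly earlier (so in particular no internal node other than possibly the root lies in $[1,v_0]$, since the root is the first such vertex infected). The key structural point is that each internal node needs $\ge r$ already-infected neighbors, so the branching factor is at least $r$; consequently a witness tree of height $h$ has at least $r^h$ leaves, all of which are initially infected, while its total size is at most roughly $r \cdot r^h$ vertices. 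I would first make this ``witness tree'' notion precise and bound its size, noting that the height $h$ cannot be too large — in fact $h = O(\log n / \log\log n)$ suffices because the tree must fit inside $[n]$, and more importantly because only $O(\log n)$ ``levels'' of attachment are available given the logarithmic degree growth.

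The second step is to bound the probability that a specific candidate witness tree $T$ is present in $\dgnk$ and has all its leaves initially infected. For a fixed set of $\ell$ vertices forming the leaves, the probability they are all initially infected is $p^{\ell}$. For the edges of $T$ to be present: each edge of the tree goes from a vertex $u$ to one of its $k$ uniformly chosen out-neighbors in $[u-1]$, and the probability that a given vertex $u$ has a given smaller vertex among its choices is at most $k/(u-1) \le_b k/u$. Since all internal vertices (other than the root) lie above $v_0$, every edge in the tree incident from below contributes a factor $O(k/u) = O(k/v_0)$ at best; more carefully, summing over all possible vertex labels for the $\approx r^h$ leaves and $\approx r^h$ internal nodes and accounting for which endpoint is ``newer,'' each tree edge contributes a factor that, after summing the newer endpoint's label over $[n]$, is of order $O(k \log n)$ times a reciprocal-label factor, and the initial-infection factor $p$ attaches to each leaf. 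The upshot should be that the expected number of witness trees of a given shape and height $h$ is at most something like $\bigl(C k \log n \cdot p\bigr)^{r^h} \cdot (\text{poly in } n \text{ for the internal labels and the root})$, and since $p \log n \le \omega^{-1}(\log n)^{-1/(r-1)} \to 0$, each such term is tiny.

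The third step is to sum over all shapes and heights and over the root label $v \le v_0$. The number of shapes of a tree with $\le r\cdot r^h$ nodes is at most exponential in $r^h$, which is absorbed by the $(Ck\log n \cdot p)^{r^h}$ factor once that base is $< 1$; there are at most $v_0 = \omega^{1/2}(\log n)^{r/(r-1)}$ choices for the root, and $h$ ranges over $O(\log n/\log\log n)$ values. The exponent bookkeeping is where the precise value of $v_0$ and the precise form of $\omega$ enter: I expect that with $\ell \approx r^h$ leaves one gets a bound like $v_0 \cdot (\text{something})^{r^h}$, and one needs $v_0$ times the $h=1$ term (the smallest tree, with $\ell = r$) to already be $o(1)$. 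That $h=1$ term is roughly $v_0 \cdot (k/v_0)^{?} \cdot (\log n \cdot p)^r$-type quantity; tracking the exact powers of $\log n$ to confirm it beats $v_0^{-1} = \omega^{-1/2}(\log n)^{-r/(r-1)}$ against $(p\log n)^r \le \omega^{-r}(\log n)^{-r/(r-1)}$ is the delicate part. I anticipate that the main obstacle is exactly this careful accounting: setting up the witness tree so that (a) every internal non-root vertex is forced to be large (above $v_0$), giving the crucial smallness of the per-edge probability, and (b) the combinatorial factor for labelling and shape-choosing is genuinely dominated by the analytic smallness, uniformly over all heights $h$. A secondary technical point is handling repetitions: a vertex could in principle appear as an infected neighbor of two different internal nodes, so one should either restrict to a genuinely tree-shaped minimal witness (which exists by taking a BFS/peeling of the infection process) or absorb the overcounting into constants. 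Once the witness tree is correctly constrained, the first-moment sum is routine and yields the claim.
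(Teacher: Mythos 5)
The central object in your argument --- a witness \emph{tree} of height $h$ with $r^h$ \emph{distinct} leaves, all initially infected --- need not exist when a vertex $v\le v_0$ gets infected, so the first-moment bound you propose does not control the bad event. The issue you defer as ``a secondary technical point'' (a vertex serving as an infected neighbour of two different internal nodes) is in fact fatal to the tree formulation. Concretely, for $r=2$: if $x$ has neighbours $y_1,y_2$, and both $y_1$ and $y_2$ are infected at step $1$ because each is adjacent to the \emph{same} pair $z_1,z_2\in A_0$, then $x$ is infected at step $2$, but there is no subtree of $G_{n,k}$ with distinct vertices, rooted at $x$, in which every internal node has $2$ children and every leaf lies in $A_0$. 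Neither of your proposed fixes works: BFS/peeling of the infection history produces a DAG, not a tree (unfolding it into a tree forces repeated vertices, and then the probability that ``all leaves are initially infected'' is $p$ raised to the number of \emph{distinct} leaves, which can be as small as $r$ no matter how tall the tree is); and the overcounting is not absorbable into constants, since it changes the exponent of $p$ from $r^h$ down to as little as $r$.

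The necessary structure is a rooted DAG (the paper's ``witness graph''): $t$ internal vertices of out-degree exactly $r$, $\ell$ leaves lying in $A_0$, and every non-root vertex of in-degree at least $1$. The only constraint linking these is $r\le \ell\le (r-1)t+1$, so $\ell$ can stay bounded while $t$ grows, and the expected count must be controlled for all such pairs $(t,\ell)$, not just the balanced tree case $\ell\approx r^h$, $t\approx r^h$. The paper does this by summing over in/out-degree sequences; the resulting bound is of the shape $p^{\ell}\,(\log n)^{t+\ell-1}\,(t/n_0)^{(r-1)t-\ell+1}\,C^{rt}$, and the smallness comes from a trade-off: when $\ell$ is small the in-degree excess $(r-1)t-\ell+1$ is large, and each excess edge must point into a vertex of label at least $n_0=\omega^{1/2}(\log n)^{r/(r-1)}$, contributing a factor $n_0^{-1}$ that beats the per-vertex factor $\log n$. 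A separate device is also needed for witness graphs with more than $\log n$ internal vertices, where the above bound is applied to a minimal large witness and to the sub-witnesses rooted at the children of its root, reducing to witness graphs with between $(\log n)/r$ and $\log n$ internal vertices. None of this bookkeeping is recoverable from your outline, because the $(Ck\log n\cdot p)^{r^h}$ heuristic it rests on is an artifact of the tree structure that the process does not actually guarantee.
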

The proof is based on the idea of ``witnesses'', which in this case are rooted directed $r$-ary subgraphs of $G_{n,k}$ with ``leaves''
from the set of initially infected vertices that must be present for a root to be eventually infected, 
cf. Frieze and Pittel \cite{FriPit} and Abdullah and Fountoulakis \cite{AbF}.

We conclude this section with the following standard Chernoff inequalities, which we will use in our proofs in the next sections. (See for instance~\cite[Theorem 2.1 and Theorem 2.8 ]{JLR}.)

\begin{lemma}
Let $X$ be a sum of independent Bernoulli random variables with $\mean[X]=\mu$. Then
\begin{align}
&\pr(X\le \mu-t)  \le e^{-t^2/(2\mu)}  \qquad (t>0), \label{C1} \\
&\pr(|X-\mu| \ge \eps\mu)  \le 2e^{-\eps^2\mu/3} \qquad (0<\eps\le 3/2).		\label{C3}
\end{align}
\end{lemma}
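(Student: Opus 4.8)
These are the classical Chernoff--Hoeffding bounds, so the plan is to derive them by the standard exponential-moment (Bernstein) method rather than to invoke anything graph-theoretic. Write $X=\sum_{i=1}^m X_i$ with $X_i\sim\ber(p_i)$ independent and $\mu=\sum_i p_i$. For any $\lambda>0$ one has $\E[e^{\lambda X_i}]=1+p_i(e^\lambda-1)\le\exp\!\bigl(p_i(e^\lambda-1)\bigr)$, using $1+x\le e^x$; multiplying over $i$ gives the single clean inequality
\[
\E[e^{\lambda X}]\le\exp\!\bigl(\mu(e^\lambda-1)\bigr),\qquad\text{and symmetrically}\qquad \E[e^{-\lambda X}]\le\exp\!\bigl(\mu(e^{-\lambda}-1)\bigr).
\]
Everything else is optimizing a one-variable exponential in $\lambda$ together with elementary bookkeeping inequalities for $\log(1+x)$.

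For the lower-tail bound \eqref{C1}: by Markov applied to $e^{-\lambda X}$,
\[
\pr(X\le\mu-t)=\pr(e^{-\lambda X}\ge e^{-\lambda(\mu-t)})\le e^{\lambda(\mu-t)}\,\E[e^{-\lambda X}]\le\exp\!\bigl(\lambda(\mu-t)+\mu(e^{-\lambda}-1)\bigr).
\]
Choosing $\lambda=-\log(1-t/\mu)>0$ (valid since $0<t<\mu$; for $t\ge\mu$ the bound is vacuous because the event is empty and $e^{-t^2/(2\mu)}$ bounds nothing we need there, or one argues $t<\mu$ WLOG) turns the exponent into $-\mu\bigl[(1-t/\mu)\log(1-t/\mu)+t/\mu\bigr]$, and the elementary inequality $(1-u)\log(1-u)+u\ge u^2/2$ for $u\in[0,1)$ yields the exponent $\le-t^2/(2\mu)$, which is \eqref{C1}.

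For the two-sided bound \eqref{C3}: apply the upper-tail version with $t=\eps\mu$, $\lambda=\log(1+\eps)$, obtaining $\pr(X\ge(1+\eps)\mu)\le\exp\!\bigl(-\mu[(1+\eps)\log(1+\eps)-\eps]\bigr)$, and combine with \eqref{C1} at $t=\eps\mu$ which already gives $\pr(X\le(1-\eps)\mu)\le e^{-\eps^2\mu/2}\le e^{-\eps^2\mu/3}$. It remains to check $(1+\eps)\log(1+\eps)-\eps\ge\eps^2/3$ for $0<\eps\le 3/2$; this is a single-variable calculus fact (the function $h(\eps)=(1+\eps)\log(1+\eps)-\eps-\eps^2/3$ has $h(0)=0$, $h'(0)=0$, and $h''(\eps)=\tfrac{1}{1+\eps}-\tfrac23\ge 0$ exactly on $\eps\le 1/2$, so one verifies $h\ge 0$ on $[0,3/2]$ by noting $h$ increases then the endpoint/monotonicity argument closes it — this is the only place needing a little care). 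Summing the two one-sided estimates gives the factor $2$ in \eqref{C3}. The main (and only real) obstacle is pinning down these two elementary inequalities for $\log(1+x)$-type expressions on the stated ranges; everything else is the routine Markov-plus-optimize template. Since the paper cites \cite[Theorems 2.1 and 2.8]{JLR}, the cleanest route is simply to say the lemma is a restatement of those results and omit the derivation; if a self-contained proof is wanted, the sketch above is it.
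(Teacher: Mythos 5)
Your derivation is correct, and it is the standard exponential-moment argument; the paper itself gives no proof of this lemma at all, simply quoting it as the classical Chernoff bounds with a citation to Janson--\L uczak--Ruci\'nski (Theorems 2.1 and 2.8), whose proofs run along exactly the lines you sketch. So your proposal does not diverge from the paper — it supplies the routine verification the paper delegates to the reference. Two small points if you want the write-up airtight: at $t=\mu$ the event $\{X\le 0\}$ is not empty, but the bound still holds since $\pr(X=0)=\prod_i(1-p_i)\le e^{-\mu}\le e^{-\mu/2}$; and in the two-sided bound the claim $(1+\eps)\log(1+\eps)-\eps\ge\eps^2/3$ on $(0,3/2]$ needs the explicit endpoint check $h(3/2)=\tfrac52\log\tfrac52-\tfrac94>0$, since $h$ increases up to the zero of $h'$ and decreases thereafter, so its minimum on the interval is attained at one of the two endpoints.
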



\section{Minimum degree and connectivity of $G_{n,k}$}\label{sec:mindegree}

We prove Theorem~\ref{thm:mindeg} and Theorem~\ref{thm:connectivity} in this section.

\subsection{Minimum degree in $\gnk$}
Recall that $\din{v}$ and $\dout{v}$ denote the in-degree and the out-degree of $v$ in $\dgnk$, respectively, and $D(v)=\din{v}+\dout{v}$.

Since $\din{n}=0$ and $\dout{n}\le k$, we have $D(n)\le k$ and hence the minimum degree of $\gnk$ is at most $k$. The next  two lemmas show that there is no vertex with degree less than $k-1$, whp.  

\begin{lemma}\label{big in-degrees}
There is no vertex in $[\floor{\log n}]$ with in-degree less than $k$ in $\dgnk$.
\end{lemma}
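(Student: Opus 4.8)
The plan is to bound the probability that *some* vertex $v \le \lfloor \log n\rfloor$ has in-degree at most $k-1$ in $\dgnk$, and show this tends to $0$. Fix such a $v$. The in-degree $\din{v}$ is a sum of indicators: for each $u \in [v+1, n]$, vertex $u$ makes $k$ uniform independent selections from $[u-1]$, and $u$ sends at least one edge to $v$ precisely when at least one of these $k$ selections lands on $v$, which happens with probability $q_u := 1-(1-1/(u-1))^k$. These events are independent over $u$, so $\din{v}$ stochastically dominates a sum of independent Bernoulli$(q_u)$ variables (in fact it equals the number of $u$'s that pick $v$ at least once, which already ignores multiplicities — good enough since we only need a lower bound on the degree). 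Thus $\mean[\din{v}] \ge \sum_{u=v+1}^{n} q_u \ge \sum_{u=v+1}^{n} \frac{k}{u-1}\bigl(1 - O(k/u)\bigr) \sim k\log(n/v)$, which for $v \le \log n$ is $\sim k\log n$, tending to infinity.

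The key step is then a lower-tail estimate: since $\din{v}$ dominates a sum of independent Bernoullis with mean $\mu_v \gtrsim k\log n$, the Chernoff bound \eqref{C1} gives
\[
\pr\bigl(\din{v} \le k-1\bigr) \le \pr\bigl(\din{v} \le \mu_v/2\bigr) \le \exp\bigl(-\mu_v/8\bigr) \le \exp\bigl(-c k \log n\bigr) = n^{-ck}
\]
for a suitable constant $c > 0$ (any $c < 1/8$ works for $n$ large, since $\mu_v \ge (1-o(1))k\log n$ uniformly over $v \le \log n$). Taking a union bound over the at most $\log n$ choices of $v$ gives $\pr(\exists\, v \le \lfloor\log n\rfloor : \din{v} < k) \le \log n \cdot n^{-ck} = o(1)$, which proves the lemma.

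I do not expect a genuine obstacle here; the only point requiring a little care is making the domination precise — namely that $\din{v}$, counted as a number of edges in the simple graph $\dgnk$, is at least the number of later vertices $u$ that select $v$ at least once, and that the latter is a sum of genuinely independent Bernoulli variables so that \eqref{C1} applies. One should also confirm the estimate $\mu_v = (1-o(1))k\log n$ holds uniformly for all $v$ in the range $1 \le v \le \lfloor\log n\rfloor$, which is immediate since $\log(n/v) \ge \log n - \log\log n = (1-o(1))\log n$. Everything else is a routine union bound.
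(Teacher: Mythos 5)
Your proof is correct and follows essentially the same route as the paper: write $\din{v}$ as a sum of independent Bernoulli indicators over the later vertices, note the mean is asymptotic to $k\log n$ uniformly for $v\le\lfloor\log n\rfloor$, apply the lower-tail Chernoff bound \eqref{C1}, and finish with a union bound. The only cosmetic difference is that the paper lower-bounds $\din{v}$ by the count of selectors among vertices larger than $\lfloor\log n\rfloor$, while you use the exact count over all $u>v$; both give the same estimate.
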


\begin{proof}
Write $m=\floor{\log n}$ and fix $x \in [m]$. 
The in-degree of $x$ satisfies
\[
\din{x} \ge Z_m(x):=\sum_{j\ge m+1} \xi_j(x),
\]
where $\xi_j(x)$ is the indicator of the event that $j$ chooses $x$, i.e., $(j,x)$ is an edge in $\dgnk$. Note that $\xi_j(x)$'s are independent. Also, uniformly for $j>m$,
\[
\pr(\xi_j(x)=1)=1- \left(1- \frac{1}{j-1}\right)^k \sim \frac{k}{j}
\]
and hence 
\[
 \mu:=\mean[Z_m(x)] \sim k\log(n/m)\sim k\log n.
\]
Chernoff bound~\eqref{C1} yields
\[
\pr(Z_m(x)\le \mu/2)=\pr(Z_m(x) \le \mu -\mu/2) \le e^{-\mu/8}.
\]
Using the last inequality and the union bound finishes the proof.
\end{proof}

\begin{lemma}\label{no small out-degree}
There is no vertex in $[m+1,n]$ with out-degree strictly smaller than $k-1$ in $\dgnk$.
\end{lemma}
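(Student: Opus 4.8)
The plan is to bound, for each individual vertex $u\in[m+1,n]$, the probability that its out-degree drops below $k-1$ by $O(u^{-2})$, and then to finish by a first-moment union bound: since $m=\floor{\log n}\to\infty$, the tail $\sum_{u>m}(u-1)^{-2}=O(1/m)=O(1/\log n)$ tends to $0$. It is precisely the restriction to an interval $[m+1,n]$ whose left endpoint grows that makes this union bound vanish — a single bad vertex $u$ contributes $\Theta(1/u)$ and $\sum_u 1/u$ diverges — while the first $m$ vertices (where $u-1$ may even be below $k$) are already taken care of in Lemma~\ref{big in-degrees} via their large in-degrees.

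First I would describe the bad event combinatorially. Vertex $u$ makes $k$ i.i.d.\ uniform selections $s_1,\dots,s_k$ from $[u-1]$, and $\dout{u}$ equals the number of \emph{distinct} values among them. If these distinct values occur with multiplicities $m_1,\dots,m_d$ (so $\sum_i m_i=k$ and $d=\dout{u}$), then $k-\dout{u}=\sum_i(m_i-1)$, so $\dout{u}\le k-2$ forces $\sum_i(m_i-1)\ge 2$. This can happen only if either some multiplicity is $\ge 3$, i.e.\ there are indices $a<b<c$ with $s_a=s_b=s_c$, or there are two disjoint pairs $\{a,b\},\{c,d\}$ with $s_a=s_b$ and $s_c=s_d$. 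The point is that a \emph{single} coincidence (probability $\Theta(1/u)$) only yields $\dout{u}=k-1$ and is harmless; two coincidences are needed to reach $k-2$, and they cost a factor $(u-1)^{-2}$.

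Next I would bound each type of configuration. For a fixed triple, $\pr(s_a=s_b=s_c)=(u-1)^{-2}$, and there are $\binom{k}{3}$ triples; for a fixed pair of disjoint pairs, $\pr(s_a=s_b,\ s_c=s_d)=(u-1)^{-2}$ by independence, and there are at most $\binom{k}{2}\binom{k-2}{2}$ of them. A union bound over these finitely many (constant in $n$) events gives
\[
\pr\bigl(\dout{u}<k-1\bigr)\ \le\ \Bigl[\binom{k}{3}+\binom{k}{2}\binom{k-2}{2}\Bigr](u-1)^{-2}\ \le_b\ (u-1)^{-2}.
\]
Summing this over $u\in[m+1,n]$ and using $\sum_{u>m}(u-1)^{-2}=O(1/m)$ completes the argument.

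There is no real obstacle here — the whole proof is a short first-moment union bound. The only step that needs a little care is the combinatorial reduction of $\{\dout{u}\le k-2\}$ to ``at least two coincidences among the $k$ choices of $u$'', since this is exactly what upgrades the non-summable single-collision probability $\Theta(1/u)$ to the summable rate $O(u^{-2})$. Combined with Lemma~\ref{big in-degrees}, which handles the vertices in $[m]$ through their in-degrees, this gives $\delta_n\ge k-1$ whp.
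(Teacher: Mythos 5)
Your proof is correct and takes essentially the same route as the paper: a per-vertex bound $\pr(\dout{u}\le k-2)=O(u^{-2})$ followed by a union bound over $u>m$ using $\sum_{u>m}(u-1)^{-2}=O(1/m)\to 0$. The only difference is in how the $O(u^{-2})$ is obtained --- the paper notes that on this event all $k$ selections land in some $(k-2)$-subset of $[u-1]$, giving $\binom{u-1}{k-2}\bigl(\tfrac{k-2}{u-1}\bigr)^{k}=O(u^{-2})$, whereas you count collision patterns (a triple coincidence or two disjoint coincident pairs); both counting arguments are valid and your reduction of $\{\dout{u}\le k-2\}$ to ``at least two coincidences'' is complete.
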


\begin{proof}
For any $x>m$,
\[
\pr(\dout{x} \le k-2)
\le {x-1\choose k-2} \lp \frac{k-2}{x-1}\rp^k
=O(x^{-2}).
\]
Hence
\[
\pr(\exists\,x> m: \dout{x}\le k-2)=O\left(\sum_{x> m}x^{-2}\right)=O(1/m)\to 0. \qedhere
\]
\end{proof}

\begin{proof}[\bf{Proof of Theorem~\ref{thm:mindeg}}]
Let ${\cal S}_n$ denote the set of vertices with out-degree $(k-1)$ and in-degree 0 in $\dgnk$.
Lemmas~\ref{big in-degrees} and \ref{no small out-degree} and the fact $D(n)\le k$ together imply that $\delta_n$ is either $k-1$ or $k$ whp, and
\[
\pr(\delta_n =k-1)= \pr({\cal S}_n\not= \varnothing) +o(1).
\]
Hence it is enough to show 
\[
\pr({\cal S}_n= \emptyset) \sim e^{-(k-1)/2}.
\]
Since  $[m]\cap {\cal S}_n=\emptyset$ whp by Lemma~\ref{big in-degrees}, we may only focus on the set $S_n^*:=S_n\setminus [m]$.

For a given vertex $x$,
\beq\label{in-degree=0}
\pr(\din{x}=0)=\prod_{y> x}\frac{(y-2)^k}{(y-1)^k}= \frac{(x-1)^k}{(n-1)^k}= \left(1+O(x^{-1})\right) (x/n)^k.
\eeq
On the other hand, for every vertex  $x\ge k$,
\beq\label{outdegree=k-1}
\pr(\dout{x}=k-1)={x-1\choose k-1}{k\choose 2} (k-1)!\frac 1{(x-1)^k}= {k \choose 2}\frac{(x-1)_{k-1}}{(x-1)^k} \sim \frac{k(k-1)}{2x}.
\eeq
Since $\din{x}$ and $\dout{x}$ are independent, 
\[
\pr(\din{x}=0,\, \dout{x}=k-1)=\binom{k}{2}\frac{(x-1)_{k-1}}{(n-1)^k}.
\]
More generally, given $\ell\ge 1$, consider $\bold x:=\{x_1>x_2>\cdots>x_{\ell}\}$. Let us compute
\[
\pr(\bold x):=\pr\left(\bigcap_{j=1}^{\ell}\big\{\din{x_j}=0,\, \dout{x_j}=k-1\big\}\right).
\]
On this event, the admissible selections for the vertex $y\in [n]$ form a set 
\[
A(y):=[1,y-1]\setminus \{x_j:\, x_j<y\};
\]
if y selects $x_j$, then the vertex $x_j$ will have a positive in-degree. ``Merging'' \eqref{in-degree=0} and
\eqref{outdegree=k-1} we obtain
\[
\pr(\bold x)=\prod_{y\notin \bold x:\,y>x_{\ell}} \frac{|A(y)|^k}{(y-1)^k}\cdot\prod_{y\in\bold x}\frac{\binom{|A(y)|}{k-1}\binom{k}{2}(k-1)!}{(y-1)^k}.
\]
Telescoping the first product for $y>x_1$ yields
\beq\label{P(boldx)=}
\pr(\bold x)=\prod_{j=1}^{\ell}\frac{(x_1-j)^k}{(n-j)^k}\prod_{y\notin \bold x:\,x_{\ell}<y<x_1}
\!\!\frac{|A(y)|^k}{(y-1)^k}\cdot\prod_{y\in\bold x}\frac{\binom{|A(y)|}{k-1}\binom{k}{2}(k-1)!}{(y-1)^k}.
\eeq
We will need a more transparent asymptotic version of \eqref{P(boldx)=} for the case when $x_{\ell}$
is large. First,
\[
\prod_{j=1}^{\ell}\frac{(x_1-j)^k}{(n-j)^k}=(1+O(x_1^{-1}))\left(\frac{x_1}{n}\right)^{\!k\ell}.
\]
Similarly,
\[
\prod_{y\in\bold x}\frac{\binom{|A(y)|}{k-1}\binom{k}{2}(k-1)!}{(y-1)^k}=(1+O(1/x_{\ell}))\binom{k}{2}^{\!\!\ell}\prod_{j=1}^{\ell}\frac{1}{x_j}.
\]
Further, for $y\in (x_{j+1},x_j)$ we have $|A(y)|=y-(\ell+1-j)$; therefore
\begin{align*}
\prod_{y\in (x_{j+1},x_j)}\frac{|A(y)|^k}{(y-1)^k}&=\prod_{y\in (x_{j+1},x_j)}\left(\frac{y-1-(\ell-j)}{y-1}\right)^k\\
&=\prod_{y\in (x_{j+1},x_j)}\exp\left(-\frac{k(\ell-j)}{y-1}+O(y^{-2})\right)\\
&=\exp\left(-\sum_{y\in (x_{j+1},x_j)}\frac{k(\ell-j)}{y-1}+O(x_{j+1}^{-1})\right)\\
&=\exp\left(-k(\ell-j)\log\frac{x_j}{x_{j+1}}+O(x_{j+1}^{-1})\right)\\
&=(1+O(x_{j+1}^{-1})) \left(\frac{x_{j+1}}{x_j}\right)^{k(\ell-j)}.
\end{align*}
Therefore, for $x_{\ell}\to\infty$ however slowly, we have
\begin{align}\label{P(x)appr}
\pr(\bold x)
&=(1+O(x_{\ell}^{-1}))\binom{k}{2}^{\ell}\left(\frac{x_1}{n}\right)^{k\ell}\left(\prod_{j=1}^{\ell}\frac{1}{x_j}\right) \cdot \prod_{j=1}^{\ell-1}\left(\frac{x_{j+1}}{x_j}\right)^{k(\ell-j)}	\notag	\\
&=(1+O(x_{\ell}^{-1}))\binom{k}{2}^{\ell}n^{-k\ell}\prod_{j=1}^{\ell}x_j^{k-1}.
\end{align}
Using~\eqref{P(x)appr}, we get
\begin{align*}
\mean\left[\binom{|{\cal S}_n^*|}{\ell}\right]&=\sum_{x_1>\cdots>x_{\ell}>m}\,\pr(\bold x)\\
&=(1+O(m^{-1}))\binom{k}{2}^{\ell}\sum_{x_1>\cdots>x_{\ell}>m}\prod_{j=1}^{\ell}\frac{1}{n}\left(\frac{x_j}{n}\right)^{k-1}\\
&=(1+O(m^{-1}))\binom{k}{2}^{\ell}\idotsint\limits_{1\ge z_1>\cdots>z_{\ell}} \prod_{j=1}^{\ell} z_j^{k-1}\,dz_1\cdots dz_{\ell}\\
&=\frac{1+O(m^{-1}))}{\ell!}\binom{k}{2}^{\ell}\left(\int_0^1 z^{k-1}\,dz\right)^{\ell}
=(1+O(m^{-1}))\frac{\left(\frac{k-1}{2}\right)^{\ell}}{\ell!}.
\end{align*}
Therefore $|{\cal S}_n^*|$ (whence $|S_n|$) converges in distribution to a Poisson random variable with mean $(k-1)/2$, which finishes the proof.
\end{proof}

\begin{remark}
The proof of Theorem~\ref{thm:mindeg} shows that the number of vertices with degree smaller than $k$ is bounded whp. What about the number of vertices with degree $k+j$, with $j\ge 0$ fixed?
Since $\pr(\din{x}<k)=O(x^{-1})$ for all $x\in [n]$, almost all the vertices have out-degree $k$. Let $X_j$ denote the number of vertices with in-degree $j$. Standard computations of the first two moments give
\[
X_j \sim \frac{n}{k+1} \lp \frac{k}{k+1}\rp^j
\]
whp. Thus, as opposed to preferential attachment graphs, where the degrees follow power-law distribution, the degrees in $G_{n,k}$ have geometric distribution; the number of vertices with total degree $k+j$, scaled by $n$, is asymptotically $\frac{1}{k+1} \lp \frac{k}{k+1}\rp^j$ whp.
\end{remark}

\subsection{Connectivity of $G_{n,k}$}

\begin{proof}[{\bf Proof of Theorem~\ref{thm:connectivity}}] 
Since $\kappa_n\le\delta_n$ always, it suffices to prove that $\lim_{n\to\infty}\pr(\kappa_n<\delta_n)=O(n^{-1/k})$.

\medskip

\noindent \textbf{(1)} Let us first prove 
\begin{equation}\label{P(kap<del=k-1)}
\pr(\kappa_n<\delta_n=k-1)=O(n^{-2/k}).
\end{equation}
Since $G_{n,k}$ is connected, we have $\kappa_n\ge 1$. Hence it suffices to consider $k\ge 3$.
By the definition of connectivity, $\pr(\kappa_n<k-1)=\pr({\cal U}_n)$,
where ${\cal U}_n$ is the event that there exists a partition  $[n]=X_1\sqcup X_2\sqcup X_3$  such that, denoting $\nu_j=|X_j|$:
\begin{enumerate}[noitemsep, topsep=0pt]
\item[(i)] $1\le \min\{\nu_1,\nu_2\}$ and $1\le\nu_3\le k-2$,
\item[(ii)] no vertex of $X_1$ ($X_2$ resp.) selects a vertex from $X_2$ ($X_1$ resp.).
\end{enumerate}
In fact, it suffices to consider only $\nu_3=k-2$ and from now on we assume this is the case.
 By the union bound and the definition of $G_{n,k}$,
\begin{equation}\label{P(Un)<X1,X2,X3}
\pr({\cal U}_n)\le\sum_{X_1,X_2,X_3}
\prod_{j=1}^2\prod_{x\in X_j}\frac{(\nu_j+\nu_3-r(x; X_j\sqcup X_3))^k}{(x-1)^k}.
\end{equation}
Here, for $j=1,2$,  $r(x;X_j\sqcup X_3)$ is the rank of $x\in X_j$ in the set $X_j\sqcup X_3 $. (E.g. $r(x:X_j\sqcup X_3)=1$ if $x$ is the largest vertex in $X_j\sqcup X_3$. )  
So  the last fraction is the probability
that vertex $x\in X_j$ selects the vertices $y<x$ exclusively from $X_j\sqcup X_3$. Now $r(x:X_j\sqcup X_3)\ge r(x;X_j)$, the rank of $x\in X_j$ in $X_j$, and $r(x;X_j)$ runs from $1$ to $\nu_j$ as $x$ goes through the vertices of $X_j$ in the decreasing order.
So
\begin{align*}
\prod_{j=1}^2\prod_{x\in X_j}(\nu_j+\nu_3-r(x;x_j\sqcup X_j))^k&\le \left[\prod_{j=1,2}(\nu_j+\nu_3-1)_{\nu_j}\right]^k\\
&\le \gamma_1 \bigl[(\nu_1+\nu_3-1)!\, (\nu_2+\nu_3-1)!\bigr]^k,
\end{align*}
where $\gamma_1$ and $\gamma_r$ below stand for constants dependent on $k$ only. Since
\begin{equation}\label{(n-1-nu_3)!}
\begin{aligned}
\prod_{j=1}^2\prod_{x\in X_j}\frac{1}{(x-1)^k}&=\frac{1}{[(n-1)!]^k}\prod_{x\in X_3}(x-1)^k\\
&\le \frac{[(n-1)_{\nu_3}]^k}{[(n-1)!]^k}=\frac{1}{[(n-1-\nu_3)!]^k},
\end{aligned}
\end{equation}
the product in \eqref{P(Un)<X1,X2,X3} is bounded by a function dependent on $\nu_j$ only. Therefore,  \eqref{P(Un)<X1,X2,X3} 
is transformed into
\begin{align}
\pr({\cal U}_n)&\le_b\sum_{\nu_1+\nu_2+\nu_3=n\atop \min\{\nu_1,\nu_2\}\ge 1} 
\frac{n!}{\nu_1!\nu_2!\nu_3!}\left[\frac{(\nu_1+\nu_3-1)!\, (\nu_2+\nu_3-1)!}{(n-1-\nu_3)!}\right]^k\notag\\
&\le_b n^{\nu_3}\!\!\!\!\!\sum_{\nu_1+\nu_2=n-\nu_3\atop \min\{\nu_1,\nu_2\}\ge 1}\!\! \frac{(n-\nu_3)!}{\nu_1!\nu_2!}
\cdot\left[\frac{(\nu_1+k-3)!\,(n-\nu_1-1)!}{(n-\nu_3-1)!}\right]^k.\label{<bSum}
\end{align}
Here $\nu_3=k-2$. The term corresponding to $j=\nu_1-\nu_3$ (with fixed $j$) is in the order of $O(n^{\nu_3+\nu_1-jk})=O(n^{(2-j)(k-1)-2})$, which is $O(n^{-2})$ for $j\ge 2$. So it is enough to consider the case $j\le 1$, i.e., the case $\nu_1\le k-1$.

Consider $\nu_1\le k-1$. We will be applying the union bound again, but we will need to use some additional constraints the  
partitions $(X_1,X_2,X_3)$ must meet for the sets $X_1$ and $X_2$ to be separated by removal of the set $X_3$. Let $n_0=0$ and $n_j=\floor{n^{j/k}}$ for $j=1,\dots,k$. Also let $I_j=[n_{j-1}+1,n_j]$. We can choose $X_1$ by first specifying the numbers $a_i=|X_1\cap I_i|$ (subject to the constraint $a_1+\cdots+a_k=\nu_1$),  and then choosing $a_i$ numbers from $I_i$ for each $i$.  For a particular sequence $\bold a:=(a_1,\dots,a_k)$, there are at most $n^{\sum ia_i/k}$ choices for $X_1$. Now fix $X_1$ and $X_3$. By the definition of the event ${\cal U}_n$, each $x\in X_1$ selects (with repetition) $k$ members exclusively from $X_1\cup X_3$, which happens with probability
\[
\left(\frac{|X_1|+|X_3|}{x-1}\right)^k\le \left(\frac{2k}{n_{j-1}-2}\right)^k=O\bigl(n^{-(j-1)}\bigr),\quad\text{if }x\in X_1\cap I_j.
\]
Therefore, for the event $\mathcal A_1:=\{\text{no member of }X_1\text{ selects a member of }X_2\}$ we have
\begin{equation}\label{noX1X2}
\pr(\mathcal A_1)=O\left(\prod_{j=1}^k \left(n^{-(j-1)}\right)^{a_j}\right)=O\left(n^{-\sum_ja_j(j-1)}\right).
\end{equation}
Consider now the event $\mathcal A_2:=\{\text{none of the members of }X_1\text{ is chosen by a member of }X_2\}$.
Let $y\in X_2\cap I_j$. The probability that $y$ does not choose a member from $X_1$ is at most
\[
\left( 1-\frac{a_1+\cdots+a_{j-1}}{y-1} \right)^k \le \exp(-k(a_1+\cdots+a_{j-1})/y).
\]
Since these events for different $y\in X_2\cap I_j$ are independent, we have
\begin{align*}
&\pr(\text{no } y\in X_2\cap I_j \text{ chooses a member from }X_1) 
\le \prod_{y\in X_2\cap I_j} \exp\left(-\frac{k}{y}\sum_{t=1}^{j-1}a_t\right)\\
&=O\!\left(\!\exp\left(\!-k\left(\sum_{t=1}^{j-1}a_t\right)\sum_{y=n_{j-1}}^{n_j-1}\frac{1}{y}\right)\!\right)
=O\!\left(\!\exp\left(\!-\left(\sum_{t=1}^{j-1}a_t\right)\log n\right)\right)
=O\left(n^{-\sum_{t=1}^{j-1}a_t}\right).
\end{align*}
Consequently,
\begin{align*}
\pr({\cal A}_2) &= \prod_{j=1}^k\pr\bigl(\text{no } y\in X_2\cap I_j \text{ chooses a member from }X_1\bigr)\\
&=O\left(\prod_{j=1}^k n^{-\sum_{j=1}^k (k-j)a_j}\right).
\end{align*}
Finally,
\begin{align}
\pr({\cal A}_1 \cap {\cal A}_2) &=\pr({\cal A}_1)\pr({\cal A}_2)= O\left( n^{\sum_{j=1}^{k-1} a_j(1-j)-\sum_{j=1}^{k-1} (k-j)a_j} \right)\notag\\
&=O\left( n^{-\sum_{j=1}^k a_j(1-k)} \right)=O\left( n^{\nu_1(1-k)} \right).\label{P(A1capA2)}
\end{align}

Since the set $X_3$ can be chosen in $\binom{n}{k-2}=O(n^{k-2})$ ways, the total number of ways to choose
$X_3$ and $X_1$, for a given $\bold a$, is of order $n^{L(\bold a)}$, where $L(\bold a):=k-2+\sum_j ja_j/k$. So the contribution 
of the summands with $\nu_1\le k-1$ is of order $\sum_{\bold a}n^{L^*(\bold a)}$, $L^*(\bold a):=\nu_1(1-k)+L(\bold a)$. 
Note that $\sum_j ja_j/k \le \nu_1$ with equality holding when $a_k=\nu_1$. 
Consequently, if $\nu_1\ge 2$ or $\nu_1=1$ and $a_k=0$ then
in the first case we have $L^*(\bold a)\le -(k-2)$, and in the second case
$L^*(\bold a)\le -1+\frac{k-1}{k}=-\frac{1}{k}$.

Consider the complementary case: $\nu_1=1$ and $a_k=1$.
So $X_1=\{x\}$ and $x\ge n_k:=\lfloor n^{(k-1)/k}\rfloor\to\infty$. 
Since $x>1$,  we have $x>\min X_3$.  Otherwise the non-empty set of all elements smaller
than $x$ would be entirely contained in $X_2$, and some of them would be selected by $x$, making the partition 
$(X_1,X_2,X_3)$ non-admissible.
Given the element $x\ge n_{k}$, an element $y<x$ can be chosen in $x-1$ ways, and the 
additional $k-3$ elements, to be included in $X_3$, in  $\binom{n-2}{k-3}$ ways. 
The number of ways for $x$ to make $k$ (possibly repeated) selections from the chosen $k-2$ elements in $X_3$ is at most the number of ways to allocate $k$ distinguishable balls among $(k-2)$ boxes, which is $\binom{2k-3}{k-3}$, and each such selection 
has probability $(x-1)^{-k}$. Hence the contribution of $\nu_1=a_k=1$ to $\pr({\cal U}_n)$ is at most of order
\[
\binom{n}{k-3}\sum_{x=n_{k}}^n  x^{-k+1}=O(n^{k-3})\sum_{x=n_{k}}^{\infty} x^{1-k}=o(n^{-1}).
\]
So the overall contribution of the summands with $\nu_1\le k-1$ to $\pr(\mathcal U_n)$ is $O(n^{-2/k})$.
It follows that $\pr({\cal U}_n)=O(n^{-2/k})$ as well. This proves \eqref{P(kap<del=k-1)}.

\medskip

\noindent \textbf{(2)} Now let us prove
\begin{equation}\label{P(kap<del=k)}
\pr(\kappa_n<\delta_n=k)=O(n^{-1/k}).
\end{equation}
The proof runs parallel to the part {\bf (1)\/}. We have $\pr(\kappa_n<\delta_n=k)\le\pr({\cal V}_n)$, with ${\cal V}_n$ defined
similarly to ${\cal U}_n$, except that now $\nu_3:=|X_3|=k-1$.  In addition, $|X_1|>1$ and $|X_2|>1$: if $X_i=\{x\}$, then all the $x$'s neighbors are in $X_3$, and this is impossible on the event $\{\delta_n=k\}$. The counterpart of \eqref{<bSum} is 
\begin{equation}\label{<bSum*}
\pr({\cal V}_n)\le_b n^{\nu_3}\!\!\!\!\!\sum_{\nu_1+\nu_2=n-\nu_3\atop \min\{\nu_1,\nu_2\}\ge 1,}\!\! \frac{(n-\nu_3)!}{\nu_1!\nu_2!}
\cdot\left[\frac{(\nu_1+k-2)!\,(n-\nu_1-1)!}{(n-\nu_3-1)!}\right]^k,\quad (\nu_3=k-1).
\end{equation}
The terms corresponding to $j=\nu_1-\nu_3$ (with fixed $j$) are in the order of $O(n^{(2-j)(k-1)})$, i.e. for bounded $j\ge 3$
they are dominated
by the geometric progression with denominator and first term of order $n^{-k+1}$. So it is enough to consider the case $j\le 2$, i.e. $\nu_1\le k+1$, so that $|X_2|\ge n-2k$.

The bound \eqref{P(A1capA2)} continues to hold, but the number of ways to choose $X_3$ is now $O(n^{k-1})$. So  the
totall number of ways to choose $X_3$ and $X_1$ for a given $\bold a$ is now $\mathcal L(\bold a):=k-1+\sum_j ja_j/k$.
Hence the contribution of the summands with $\nu_1\le k+1$ is of order $\sum_{\bold a}n^{\mathcal L^*(\bold a)}$, $\mathcal L^*(a)=\nu_1(1-k)+\mathcal L(\bold a)$. Arguing as in the part {\bf (1)\/}, we obtain that $\mathcal L^*(\bold a)\le 5-2k\le -1$ if 
$\nu_1\ge 3$, and $\mathcal L^*(\bold a)\le -1/k$ if $\nu_1=2$ and $a_k\le 1$.

Consider the complementary case $\nu_1=2$, $a_k=2$; so $X_1=\{x_1, x_2\}$, and $\min(x_1,x_2)\ge n_k=\lfloor n^{(k-1)/k}\rfloor$. Suppose $x_1>x_2$. Since $\nu_3=k-1$, on the event  ``$\delta_n=k$'' {\bf (a)\/} the vertices $x_1$ and $x_2$ form an edge, i.e.
$x_1$ selects $x_2$, and {\bf (b)\/} $x_1$ ($x_2$ resp.)  makes $(k-1)$ ($k$ resp.) selections from the $(k-1)$ elements of $X_3$. The probability of this event is of order
\[
n^{k-2}\sum_{x_1\ge x_2\ge n_k} x_1^{-k+1} (x_2)^{-k}\le_b n^{k-2} \left. y^{-2k+3}\right|_{y=n_k}=o(n^{-1}). 
\]
Therefore
$
\pr({\cal V}_n)=O(n^{-1/k}),
$
which proves \eqref{P(kap<del=k)}.
\end{proof}

\section{$G_{n,k}$ expands.}\label{sec:expander}

In this section we prove Theorem~\ref{expansion} and Theorem \ref{walk}. Theorem \ref{expansion} states that, for $k\ge 2$, we have
\[
\lim_{n\to\infty}\!\pr\!\left\{\exists\, X\subseteq [n] : 1\le |X|\le n/2\text{ and }|N(X)|\le\rho |X| \right\}=0
\]
for any $\rho<\rho(k)$, where $\rho(k)$ is the positive root of
\begin{align*}
&\log 2-\frac{1}{2}\bigl[\rho\log \rho+(1-\rho)\log(1-\rho)\bigr]\\
&\qquad\qquad\qquad\qquad+k \left[\frac{1+\rho}{2}\log(1+\rho)-\rho\log \rho-\left(1-\frac{\rho}{2}\right)\log(2-\rho)\right]=0.
\end{align*}
In words, whp $G_{n,k}$ is vertex-expanding with rate $\rho$ at least.

\begin{proof}[{\bf Proof of Theorem~\ref{expansion}}]
Let $P_n$ denote the probability in question. Let $Y:=N(X)$. By the definition of $Y$, no $x\in X$ makes any selection outside 
$X\sqcup Y$, and no $z\in (X\sqcup Y)^c$ makes any selection in $X$. So,  by the union bound, we have
\begin{equation}\label{P(shrink)}
\begin{aligned}
P_n&\le \sum_{X,Y:|Y|\le \rho |X|\atop
|X|+|Y|\ge k}\prod_{x\in X}\frac{\bigl(|X|+|Y|-r(x;X\sqcup Y)\bigr)^k}{x^k}\,\,\cdot\prod_{z\in (X\sqcup Y)^c}
\frac{\bigl(|X^c|-r(z;X^c)\bigr)^k}{z^k}\\
&\le\sum_{X,Y:|Y|\le \rho |X|\atop
|X|+|Y|\ge k}\prod_{x\in X}\frac{\bigl(|X|+|Y|-r(x;X)\bigr)^k}{x^k}\,\,
\cdot\prod_{z\in (X\sqcup Y)^c}
\frac{\bigl(|X^c|-r(z;(X\sqcup Y)^c)\bigr)^k}{z^k}.
\end{aligned}
\end{equation}
For $x$ running through $X$ in the decreasing order, $r(x;X)$ runs from $1$ to $|X|-1$. Likewise in the second product
$r(z;(X\sqcup Y)^c))$ runs from $1$ to $|(X\sqcup Y)^c|-1$. Therefore
\begin{align*}
\prod_{x\in X}\bigl(|X|+|Y|-r(x;X)\bigr)&=\bigl(|X|+|Y|-1\bigr)_{|X|-1}=\frac{(|X|+|Y|-1)!}{|Y|!},\\
\prod_{z\in (X\sqcup Y)^c}
\bigl(|X^c|-r(z;(X\sqcup Y)^c)\bigr)&=(n-|X|-1)_{n-|X|-|Y|-1}=\frac{(n-|X|-1)!}{|Y|!}.
\end{align*}
In addition
\begin{align*}
\prod_{x\in X} x \cdot \prod_{z\in (X\sqcup Y)^c}\!\!\!\!\!z &= \prod_{x\in X}\! x \cdot \left(\frac{n!}{\prod\limits_{z\in X\sqcup Y} z}\right)
=\frac{n!}{\prod\limits_{z\in Y} z}\ge \frac{n!}{(n)_{|Y|}}=(n-|Y|)!.
\end{align*}
So denoting $|X|=\mu$, $|Y|=\nu$, we have
\begin{equation}\label{Pn<expl}
P_n\le \sum_{0<\mu\le n/2\atop \mu+\nu\ge k,\,\nu\le\rho \mu}\frac{n!}{\mu!\,\nu!\,(n-\mu-\nu)!}
\times\left(\frac{(\mu+\nu-1)!\,(n-\mu-1)!}{(\nu !)^2\,(n-\nu)!}\right)^k.
\end{equation}
Let $R_n(\mu,\nu)$ stand for the $(\mu,\nu)$-summand. 

{\bf (i)\/} Consider $\mu\le \mu_n:=\lfloor \eps\log n\rfloor$. Since $\mu_n^2\ll n$, and
$\nu\le \rho\mu\le \mu_n$, we have
\[
\frac{n!}{(n-\mu-\nu)!}\le_b n^{\mu+\nu},\quad \frac{(n-\mu-1)!}{(n-\nu)!}\le_b n^{\nu-\mu-1},
\]
so that 
\[
\frac{n!}{(n-\mu-\nu)!}\cdot\left[\frac{(n-\mu-1)!}{(n-\nu)!}\right]^k\le_b n^{-\mu\Delta-k},\quad\Delta:=(k-1)-(k+1)\rho.
\]
Notice that $\Delta>0$ if $\rho<\frac{k-1}{k+1}$ and $\min_{k\ge 2}\frac{k-1}{k+1}=1/3$. Also
\begin{align*}
\frac{1}{\mu!\,\nu!}\left(\frac{(\mu+\nu)!}{(\nu!)^2}\right)^k&\le\left(\frac{\mu!}{\nu!}\right)^k\binom{\mu+\nu}{\mu}^k
\le \mu^{k\nu}\cdot 2^{k(\mu+\nu)}\le \mu^{k\rho\mu}\cdot 2^{k(\mu+\nu)}.
\end{align*}
So, uniformly for $\mu\le\mu_n$, we obtain
\begin{align*}
R_n(\mu,\nu)&\le_b\left(\frac{\mu^{k\rho}}{n^{\Delta}}\right)^{\mu}\cdot\left(\frac{2^{\mu+\nu}}{n}\right)^k
\le n^{-\mu(\Delta +o(1))}n^{-k+O(\eps)}
\end{align*}
Consequently,
\begin{equation}\label{musmall}
\sum_{1\le\mu\le\mu_n\atop \nu\le\rho\mu}R_n(\mu,\nu)=O\bigl(n^{-k+O(\eps)}\bigr).
\end{equation}

{\bf (ii)\/} Turn to $\mu\in [\mu_n,n/2]$. Call a pair $(\mu,\nu)$ admissible if it is in the summation range. Suppose that $(\mu-1,\nu+1)$ is admissible as well.  Then
\[
\frac{R_n(\mu-1,\nu+1)}{R_n(\mu,\nu)}=\frac{\mu}{\nu+1}\left[\frac{(n-\mu)(n-\nu)}{(\nu+1)^2}\right]^k\ge \left(\frac{n-\mu}{\mu}\right)^{2k}\ge 1,
\]
as $\mu\le n/2$. So, given $s\in [(1+\rho)\mu_n, (1+\rho)n/2]$,
\begin{align*}
&\max \{R_n(\mu,\nu): \mu+\nu =s,\,\mu\in [\mu_n,n/2],\, \nu\le\rho\mu\}:=R_n(\mu(s),\nu(s))=:R_n(s),\\
&\qquad\qquad\qquad\quad\mu(s):=\left\lceil\frac{s}{1+\rho}\right\rceil,\,\nu(s):=s-\mu(s).
\end{align*}
Introducing $r=r(s)=\nu(s)/\mu(s)$, we have $\mu(s)=s/(1+r)$, $\nu(s)=rs/(1+r)$, and  
\begin{equation}\label{rapproxrho}
r=\rho+O(\mu_n^{-1})=\rho+O(\log^{-1}n).
\end{equation}
Scaling $s$, set $x=s/n$, so that $\mu(s)/n=x/(1+r)$, $\nu(s)/n=xr/(1+r)$; then the constraint $\mu(s)\le n/2$ translates into $x\le x_r:=(1+r)/2$. 
Let us upper-bound $R_n(s)=R_n(\mu(s),\nu(s))$ in terms of the parameter $x$. For the trinomial factor in $R(\mu,\nu)$, the 
$(\mu,\nu)$-summand in \eqref{Pn<expl}, we use the multinomial inequality and then plug in $\mu(s)$, and $\nu(s)$ to obtain
\begin{equation}\label{H}
\begin{aligned}
&\left.\frac{n!}{\mu!\,\nu!\,(n-\mu-\nu)!}\right|_{\mu=\mu(s)\atop \nu=\nu(s)}\le\left.\frac{n^n}{\mu^{\mu}\,\nu^{\nu}\,(n-\mu-\nu)^{n-\mu-\nu}}\right|_{\mu=\mu(s)\atop \nu=\nu(s)}=\left.:\exp\bigl(n H(x)\bigr)\right|_{x=s/n},\\
&\qquad\qquad H(x)=-\frac{x}{1+r}\log\frac{x}{1+r} -\frac{rx}{1+r}\log\frac{rx}{1+r}-(1-x)\log (1-x).
\end{aligned}
\end{equation}
For the remaining factor we use the Stirling formula for the factorials involved and get
\begin{equation}\label{calH}
\begin{aligned}
&\left.\frac{(\mu+\nu-1)!\,(n-\mu-1)!}{(\nu !)^2\,(n-\nu)!}\right|_{\mu=\mu(s)\atop \nu=\nu(s)}\le_b\left.\frac{(\mu+\nu)^{\mu+\nu}
(n-\mu)^{n-\mu}}{\nu^{2\nu}(n-\nu)^{n-\nu}}\right|_{\mu=\mu(s)\atop \nu=\nu(s)}=\left.:\exp\bigl(n\mathcal H(x)\bigr)\right|_{x=s/n}\\
&\qquad\qquad\quad\mathcal H(x)=x\log x+\left(1-\frac{x}{1+r}\right)\log\left(1-\frac{x}{1+r}\right)\\
&\qquad\qquad-2\frac{rx}{1+r}\log\left(\frac{rx}{1+r}\right)-
\left(1-\frac{rx}{1+r}\right)\log\left(1-\frac{rx}{1+r}\right).
\end{aligned}
\end{equation}
Therefore, uniformly for $s\in [(1+\rho)\mu_n,(1+\rho)n/2]$,
\[
R_n(s)\le_b \exp\bigl[n(H(x)+\mathcal H(x))\bigr],\quad (x=s/n).
\]
We remind that $s/n\le x_r:=(1+r)/2$ and $r$ satisfies \eqref{rapproxrho}, thus is asymptotic to $(1+\rho)/2$. Our task is to prove that
$K(x):=H(x)+k\mathcal H(x)<0$ for $x\in (0, x_r]$, provided that $\rho$ is below $r(k)$. 

Let us show that $K''(x)>0$ for $x\in (0,x_r]$, and all $k\ge 2$ if $r<2/3$.  Using \eqref{H} for $H(x)$, we obtain without much work
that $H''(x)=-\frac{1}{x(1-x)}$. Derivation of $\mathcal H''(x)$ is more involved, but the final formula is comfortingly simple as well:
\[
\mathcal H''(x)=\frac{(1+r)^2}{x(1+r-x)(1+r-rx)};
\]
therefore
\[
K''(x)=-\frac{1}{x(1-x)}+k\frac{(1+r)^2}{x(1+r-x)(1+r-rx)}.
\]
Consequently, $K''(x)>0$ for $x\le x_r$ iff
\[
\frac{r}{(1+r)^2}\frac{x^2}{1-x}<k-1,\quad \forall\, x\in (0,x_r]=(0,(1+r)/2]
\]
whence iff $\frac{r}{2(1-r)}<1$, i.e.\ $r<2/3$. This condition holds for large enough $n$ if our {\it fixed\/} $\rho$ is strictly below $2/3$.
Strict convexity of $K(x)$ on $(0,x_r]$ implies that $K(x)$ is negative on this interval provided that $K(0)\le 0$ and $K(x_r)<0$.
The former condition is met since $K(0)=0$, and the latter condition is met for large enough $n$ if $K((1+\rho)/2)<0$, {\it and\/}
$\rho<2/3$, which holds since we consider $\rho<1/3$ only.

Therefore, whp $G_{n,k}$ is a $\rho$-vertex expander if $\rho<\rho^*(k)$, where $\rho^*(k)=\min(1/3,\rho(k))$, and $\rho(k)$ is
the root of $H\bigl(\frac{1+\rho}{2}\bigr)+k\mathcal H\bigl(\frac{1+\rho}{2}\bigr)=0$, or explicitly
\begin{align*}
&\log 2-\frac{1}{2}\bigl[\rho\log \rho+(1-\rho)\log(1-\rho)\bigr]\\
&\qquad\qquad\qquad +k\left[\frac{1+\rho}{2}\log(1+\rho)-\rho\log \rho-\left(1-\frac{\rho}{2}\right)\log(2-\rho)\right]=0.
\end{align*}
An elementary calculus reveals  that $\rho(k)$ increase to $\rho( \infty)=r^*\approx 0.252<1/3$, so that $\rho^*(k)=\rho(k)$.

\end{proof}

Let $\diam(G_{n,k})$ denote the diameter of $G_{n,k}$. A standard argument gives the following corollary.

\begin{corollary} \label{cor:diameter}
For every $\rho<\rho(k)$, whp $\diam(G_{n,k}) = O(\log_{1+\rho}n)$. \qed
\end{corollary}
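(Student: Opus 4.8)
The plan is to derive the diameter bound directly from the vertex-expansion property established in Theorem~\ref{expansion}, using the standard ``ball-growth'' argument. Fix any $\rho<\rho(k)$ and work on the whp event that $G_{n,k}$ is a $\rho$-vertex-expander, i.e.\ every set $X$ with $1\le|X|\le n/2$ satisfies $|N(X)|\ge\rho|X|$. For a vertex $v$ and radius $t$, let $B_t(v)$ denote the set of vertices at graph-distance at most $t$ from $v$. As long as $|B_t(v)|\le n/2$, we have $B_{t+1}(v)\supseteq B_t(v)\cup N(B_t(v))$, so $|B_{t+1}(v)|\ge (1+\rho)|B_t(v)|$. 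Iterating from $|B_0(v)|=1$, we get $|B_t(v)|\ge(1+\rho)^t$ while the ball stays below $n/2$; hence after $t_0:=\lceil\log_{1+\rho}(n/2)\rceil$ steps either $|B_{t_0}(v)|>n/2$, or the ball has grown past $n/2$ at some earlier step. In either case there is some $t_1=O(\log_{1+\rho}n)$ with $|B_{t_1}(v)|>n/2$.

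Next I would handle the second half of the path. Applying the same reasoning to a second vertex $w$, there is $t_2=O(\log_{1+\rho}n)$ with $|B_{t_2}(w)|>n/2$. Since $|B_{t_1}(v)|+|B_{t_2}(w)|>n$, the two balls must intersect, so $\mathrm{dist}(v,w)\le t_1+t_2=O(\log_{1+\rho}n)$. Taking the maximum over all pairs $v,w\in[n]$ gives $\diam(G_{n,k})=O(\log_{1+\rho}n)$ on the expansion event, which has probability $\to1$; this proves the corollary. One small point to be careful about: the expansion hypothesis only controls sets of size $\le n/2$, which is exactly why the argument is phrased as ``grow each ball until it exceeds $n/2$'' rather than ``grow one ball to cover everything'' — the two-ball meeting trick circumvents the lack of expansion control for large sets.

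The only genuine subtlety — and the main thing to get right — is the bookkeeping at the boundary when a ball's size crosses $n/2$: one must note that $|B_{t+1}(v)|\ge(1+\rho)|B_t(v)|$ is valid precisely when $|B_t(v)|\le n/2$, and that the very first time the ball exceeds $n/2$ it still does so after $O(\log_{1+\rho}n)$ steps (the geometric growth cannot overshoot by more than a factor $1+\rho<2$ before we stop invoking it, but even a crude bound suffices here since we only need the number of steps, not the final size). Everything else is routine, which is why the paper simply writes ``a standard argument''; I would include just the ball-growth inequality, the iteration, and the intersection step, and leave the constant implicit in the $O(\cdot)$.
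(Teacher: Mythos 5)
Your proof is correct and is precisely the ``standard argument'' the paper invokes without spelling out: grow balls around two vertices using the expansion guarantee $|N(X)|\ge\rho|X|$ for $|X|\le n/2$ until each exceeds $n/2$, then conclude they intersect. The boundary bookkeeping you flag is handled correctly, so nothing is missing.
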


\begin{remark}
$G_{n,1}$ is a random recursive tree and the height of this tree is found by Pittel~\cite{Pittel} as $e\log n$ asymptotically. Hence, $\diam(G_{n,1}) =\Theta(\log n)$ whp. For $k\ge 2$, in addition to the upper bound given in the previous result, we have the lower bound $(1+o(1))\log n/\log\log n$ for $\diam(G_{n,k})$. This follows from the following easy observation coupled with the fact that the maximum degree of $G_{n,k}$ is asymptotically $k\log n$ whp.

Let $G$ be a connected graph with maximum degree $\Delta \ge 2$. Then the diameter of $G$ is at least $(\log n/ \log \Delta)-1$. Indeed, denoting by $B(x,i)$ the set of vertices that are of distance at most $i$ from $x$, we have
\[
|B(x,i)| \ \le \ 1+\Delta+\cdots+\Delta^i \  \le \  \Delta^{i+1}.
\]
Note that if the diameter of $G$ is $d$, then $|B(x,d)|=n$ and hence $n\le \Delta^{d+1}$, which is equivalent to the claim above.
\end{remark}

We use Theorem \ref{expansion} to show that whp $G_{n,k}$ is edge-expanding as well, with rate of order $\log^{-1} n$,
at least. 
Recall that the conductance of $G$, denoted $\Phi(G)$, is defined as $\min_{S\subset V}\Phi(S)$, where
\[
\Phi(S):=\frac{|\nabla(S)|\cdot |E|}{d(S)\cdot d(V\!\setminus\! S)},\quad d(T):=\sum_{v\in T}\text{deg}_G(v),
\]
and $\nabla(S)$ is the set of edges from $S$ to $V\setminus S$. By Theorem \ref{expansion}, whp $|\nabla S|\ge |N(S)|\ge \rho |S|$ for every vertex subset of $G_{n,k}$. Further, the total number of selections made by all $n$ vertices is $kn$ and
the expected total number of repeated selections of a vertex made by another vertex is of order $\sum_{x\in [n]} x^{-1}=
O(\log n)$. Hence $|E(G_{n,k})|\ge kn - O(\log n)$ whp, in which case,
\[
\Phi(S)\, \ge \, b \cdot \frac{n|S|}{D(S) D([n]\!\setminus\! S)},\qquad D(S):=\sum_{x\in S}D(x)
\] 
for an absolute constant $b$.
\begin{lemma}\label{d(X)} There exists $b>0$ such that we have 
\[
\pr\Biggl(\exists\, S\subset [n]:\,D(S)\ge k |S|+b\,|S|\log\frac{n}{|S|}\Biggr)=O(n^{-1}) .
\]
\end{lemma}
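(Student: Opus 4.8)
The plan is to bound, via a union bound over the size $s = |S|$ and over the choice of $S$, the probability that $D(S)$ exceeds $k|S| + b|S|\log(n/|S|)$. Write $D(S) = \sum_{x \in S}\dout{x} + \sum_{x \in S}\din{x} \le k s + \sum_{x \in S}\din{x}$, so it suffices to control $W_S := \sum_{x\in S}\din{x}$. The quantity $W_S$ counts directed edges $(y, x)$ with $x \in S$ and $y > x$; since $\din{x} \le $ (number of $y > x$ selecting $x$) and distinct selector vertices $y$ act independently, $W_S$ is stochastically dominated by a sum of independent indicators $\sum_{y}\mathbf 1\{y \text{ selects some } x \in S \text{ with } x < y\}$, each indicator having mean at most $|\{x \in S : x < y\}| \cdot k/(y-1)$. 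Summing over all $y \in [n]$, the mean $\mu_S := \E[\bar W_S]$ of this dominating sum satisfies $\mu_S \le k\sum_{x \in S}\sum_{y > x}\frac{1}{y-1} \le k\sum_{x\in S}\log\frac{n}{x}$. Since $S$ has $s$ elements, $\sum_{x\in S}\log(n/x)$ is maximized when $S = [s]$, giving $\mu_S \le k\sum_{x=1}^{s}\log(n/x) = k\big(s\log n - \log s!\big) \le_b k\, s\,(1 + \log(n/s))$ by Stirling. So there is a constant $c_0$ with $\mu_S \le c_0 s(1 + \log(n/s))$ for every $S$ of size $s$.

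Next I would apply the upper Chernoff bound \eqref{C3} (or the standard upper-tail bound) to $\bar W_S$: for a deviation $t$ a constant multiple of $\mu_S$, the probability that $\bar W_S \ge (1+\eps)\mu_S$ is at most $\exp(-c_1 \mu_S)$, and more usefully, for a threshold $T$ that is a large constant times $s(1 + \log(n/s))$, $\pr(\bar W_S \ge T) \le \exp(-c_2 T)$ once $T \gg \mu_S$. Choosing $b$ large enough that $b\, s\log(n/s) \ge (c_0 + 2)\,s\,(1 + \log(n/s))$ for all $1 \le s \le n$ — here one must be slightly careful in the regime $s$ close to $n$, where $\log(n/s)$ is small, but then $s(1+\log(n/s)) = O(s) = O(n)$ and a linear slack absorbed into the constant $b$ multiplying $k|S|$ handles it; alternatively restrict attention to $s \le n/2$ first and treat $s > n/2$ by a direct crude bound since $D([n]) = 2|E| \le 2kn$ — we get $\pr(D(S) \ge k s + b s\log(n/s)) \le \exp(-c_3\, b\, s\log(n/s))$ uniformly over $S$ of size $s$.

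Finally, summing over the $\binom{n}{s} \le (en/s)^s = \exp(s\log(en/s))$ choices of $S$ and then over $s$: the union bound contributes a factor $\exp(s\log(en/s))$, which is dominated by $\exp(-c_3 b\, s\log(n/s))$ provided $b$ is chosen with $c_3 b > 2$, say, leaving a net bound of order $\exp(-s\log(n/s))$ for each $s$. Summing this over $s = 1, \dots, n$ gives $O(n^{-1})$ — the $s=1$ term already contributes $\exp(-\log n) = n^{-1}$ and larger $s$ contribute less (with the caveat that for $s$ near $n$, $\log(n/s) \to 0$, so that range should be peeled off and handled separately by the crude deterministic bound $D(S) \le D([n]) = 2|E(G_{n,k})| \le 2kn$, which already forces $D(S) \le k|S| + O(|S|)$ trivially when $|S| = \Theta(n)$). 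The main obstacle is this endpoint regime $|S|$ comparable to $n$, where $\log(n/|S|)$ degenerates and the clean Chernoff-plus-union-bound estimate loses its slack; the fix is simply to split the sum at $s = n/2$ (or any fixed fraction) and dispatch the large-$s$ part using boundedness of $|E(G_{n,k})|$, which we have already established whp above the lemma.
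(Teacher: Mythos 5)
Your strategy is sound and genuinely different from the paper's. You run a first-moment union bound over all $\binom{n}{s}$ sets of each size $s$, and beat the entropy $\log\binom{n}{s}\le s\log(en/s)$ with a Poisson-type Chernoff tail at a threshold of order $s\bigl(1+\log(n/s)\bigr)$; since threshold and entropy are of the same order, a large constant $b$ suffices, the net bound per size is roughly $e^{-s(1+\log(n/s))}$, and the sum over $s$ is dominated by $s=1$, giving $O(n^{-1})$. The paper avoids the union bound over sets entirely: it controls the order statistics $D_j$ (the $j$-th largest in-degree), exploiting negative association of the in-degrees (balls-in-bins, Dubhashi--Ranjan) to factor $\pr(D_j\ge d)\le\frac{1}{j!}\bigl(\sum_x\pr(\din{x}\ge d)\bigr)^j$, chooses $d_j=\Theta(\log(an/j))$, and finishes with the deterministic inequality $\sum_{x\in S}\din{x}\le\sum_{j\le|S|}D_j$. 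The order-statistics route produces a single whp event valid simultaneously for every $S$ and sidesteps dependence issues inside a fixed set; your route is more elementary but must set up the independence structure correctly, which brings me to the one real flaw.

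As written, the domination step is false: $W_S=\sum_{x\in S}\din{x}$ is \emph{not} dominated by $\sum_y\mathbf 1\{y\text{ selects some }x\in S,\ x<y\}$, since a single $y$ can select several members of $S$ and then contributes more than $1$ to $W_S$ while the indicator contributes $1$. The fix is immediate and preserves your mean computation: dominate instead by one indicator per individual selection, $\sum_y\sum_{i=1}^k\mathbf 1\{\text{the }i\text{-th selection of }y\text{ lies in }S\cap[y-1]\}$; these $k(n-1)$ indicators are mutually independent and their total mean is exactly the $k\sum_{x\in S}\sum_{y>x}\frac{1}{y-1}$ you use. Note also that \eqref{C3} (valid only for $\eps\le 3/2$) is too weak for the large-deviation regime you need; the $(e\mu/T)^T$ form of the upper tail, which you allude to, is the right tool. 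Finally, your concern about $|S|$ close to $n$ is legitimate but is an artifact of the lemma's phrasing: the paper's own proof (and the statement in Section~\ref{sec:results}) actually delivers $D(S)\le k|S|+b|S|\bigl(1+\log(n/|S|)\bigr)$, with an extra additive $b|S|$, and your plan of splitting at $s=n/2$ and using $D(S)\le 2|E|\le 2kn$ for larger $s$ recovers exactly that form, which is all that is needed for Corollary~\ref{Phi(Gnk)>}.
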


A lower bound for the conductance of $G_{n,k}$ follows immediately.

\begin{corollary}\label{Phi(Gnk)>} 
Whp we have $\Phi(G_{n,k})\ge b^* \log ^{-1} n$ for some constant $b^*=b^*(k)>0$.		\qed
\end{corollary}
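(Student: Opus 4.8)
The plan is to assemble the corollary directly from the three facts already in hand: the vertex-expansion estimate of Theorem~\ref{expansion}, the degree-sum bound of Lemma~\ref{d(X)}, and the trivial inequality $d(V\setminus S)\le 2|E(G_{n,k})|$. Fix once and for all a constant $\rho$ with $0<\rho<\rho(k)$. Since the quantity $\Phi(S)=\dfrac{|\nabla(S)|\,|E|}{d(S)\,d(V\setminus S)}$ is unchanged under $S\mapsto V\setminus S$, it suffices to bound $\Phi(S)$ from below over all proper nonempty $S\subset[n]$ with $|S|\le n/2$.

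I would work on the intersection of the event $\mathcal E_1=\{\,|N(S)|\ge\rho|S|\text{ for every nonempty }S\text{ with }|S|\le n/2\,\}$ supplied by Theorem~\ref{expansion} and the event $\mathcal E_2=\{\,D(S)\le k|S|+b|S|\log(n/|S|)\text{ for every nonempty }S\,\}$ supplied by Lemma~\ref{d(X)}; since $\pr(\mathcal E_1)\to1$ and $\pr(\mathcal E_2)=1-O(n^{-1})$, we have $\pr(\mathcal E_1\cap\mathcal E_2)\to1$. On $\mathcal E_1\cap\mathcal E_2$, fix such an $S$. Each vertex of $N(S)$ is incident to at least one edge of $\nabla(S)$, and distinct such vertices are incident to distinct edges of $\nabla(S)$, so $|\nabla(S)|\ge|N(S)|\ge\rho|S|$. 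Also $d(V\setminus S)=2|E|-d(S)\le 2|E|$, hence $|E|/d(V\setminus S)\ge 1/2$. Finally $d(S)=D(S)\le (k+b)\,|S|\bigl(1+\log(n/|S|)\bigr)$ on $\mathcal E_2$, since $\log(n/|S|)\ge0$. Combining,
\[
\Phi(S)=\frac{|\nabla(S)|}{d(S)}\cdot\frac{|E|}{d(V\setminus S)}\ \ge\ \frac{\rho|S|}{(k+b)\,|S|\bigl(1+\log(n/|S|)\bigr)}\cdot\frac12
=\frac{\rho}{2(k+b)\bigl(1+\log(n/|S|)\bigr)}.
\]

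Since $|S|\ge1$ we have $1+\log(n/|S|)\le 1+\log n\le 2\log n$ for all $n\ge3$, so $\Phi(S)\ge \rho\big/\bigl(4(k+b)\log n\bigr)$ uniformly in the admissible $S$. Taking the minimum over $S$ yields $\Phi(G_{n,k})\ge b^*\log^{-1}n$ with $b^*:=\rho/(4(k+b))>0$, on an event of probability tending to $1$, which is the assertion. The argument is routine; the only points needing care are the reduction to $|S|\le n/2$ via the symmetry of $\Phi$, the bookkeeping that intersects the two whp events of Theorem~\ref{expansion} and Lemma~\ref{d(X)}, and the observation that it is exactly the factor $1+\log(n/|S|)$ in the bound for $d(S)$ — which is $\Theta(\log n)$ in the worst case $|S|=\Theta(1)$ — that produces the $\log^{-1}n$ rate rather than a constant. (Equivalently one may keep $|E|$ explicit and use $|E(G_{n,k})|\ge kn-O(\log n)$ whp together with $d(V\setminus S)\le 2|E|\le 2kn$, as indicated just above; this gives the same bound.) There is no genuine obstacle beyond combining the pieces.
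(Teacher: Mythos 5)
Your proof is correct and follows essentially the same route as the paper: combine the vertex-expansion bound of Theorem~\ref{expansion} (giving $|\nabla(S)|\ge|N(S)|\ge\rho|S|$) with the degree-sum bound of Lemma~\ref{d(X)}, reducing to $|S|\le n/2$ by symmetry of $\Phi$. The only cosmetic difference is that you bound $|E|/d(V\setminus S)\ge 1/2$ directly instead of invoking the paper's estimate $|E|\ge kn-O(\log n)$, and you should note that $d(S)\le D(S)$ (rather than equality, since multiple selections are merged), which still yields the same bound.
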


\begin{remark}
Note that $D(1) \sim \log n$ whp. So whp we have $\Phi(\{1\})= O(\log^{-1}n)$ and hence Corollary~\ref{Phi(Gnk)>} implies $\Phi(G_{n,k})=\Theta(\log^{-1}n)$ whp.
\end{remark}

As discussed in Main results section, Theorem \ref{walk} on convergence rate of the walk distribution to the stationary distribution
follows immediately from Corollary \ref{Phi(Gnk)>}.

\begin{proof} [Proof of Lemma \ref{d(X)}] First of all, $d(x)=d_{in}(x)+d_o(x)$, where $d_{in}(x)$ and $d_o(x)$ are in/out degrees of $x$. Since
$d_o(x)\le k$, we need to prove that, for $b_1$ sufficiently large, 
\begin{equation}\label{di(X)}
\pr\Biggl(\exists S\subset [n]:\,\sum_{x\in S}d_{in}(x)\ge b_1|S|\log\frac{n}{|S|}\Biggr)=O(n^{-1})\,\quad .
\end{equation}
By the definition of $G_{n,k}$, each vertex $y>1$ makes $k$ uniformly random selections, repetitions allowed, among vertices $x\in [y-1]$. So the sequence $\{d_{in}(x)\}_{x\in [n]}$ can be interpreted as occupancy numbers in a non-homogenious
allocation model with $k(n-1)$ distinguishable balls in total, out of which $k$ first balls are thrown (uniformly at random) into bins $\{n-1,\dots, 1\}$, the second batch of $k$ balls is thrown into bins $\{n-2,\dots,1\}$, and the last, $(n-1)$-th, batch
of $k$ balls all go into the box $1$. This is a special case of the balls and bins model, for which the bin occupancy numbers
are known to be negatively associated, Dubhashi and Ranjan \cite{DubRan}. 
Let $D_{j}$ denote the $j$-th largest in-degree in $G_{n,k}$. Obviously, $\sum_{x\in S}\din{x}\le \sum_{j\le |S|} D_{j}$; so it suffices to find a likely upper bound
for $\sum_{j\le J} D_{j}$.

Let $d$ be given. By the definition of $D_j$, the union bound and the negative association of the in-degrees, we have
\begin{align}\label{1/j!}
\pr(D_j\ge d)
&\le \sum_{X\subset [n]: |X|=j}\pr\Bigl(\bigcap_{x\in X}\{\din{x}\ge d\}\Bigr)	\notag\\
&\le \sum_{X\subset [n]: |X|=j}\,\prod_{x\in X}\!\!\pr(\din{x}\ge d)			\notag\\
&\le \frac{1}{j!}\left(\sum_{x\in [n]}\!\!\pr\bigl(\din{x}\ge d\bigr)\right)^j.
\end{align}
For the RHS in \eqref{1/j!} to tend to zero, $d$ has to depend on $j$. To choose $d=d(j)$, first let us bound 
$\pr(\din{x}\ge d)$ by an explicit function of $d$ and $x$.
By the definition of $G_{n,k}$, we have $\din{x}=\sum_{y>x} \xi_y(x)$,
where $\xi_{y}(x)$ is the indicator of the event ``$y$ selects $x$''. We already saw  that, given $x$, the $\xi_y(x)$ are independent,
and
\begin{equation}\label{<k/y}
\pr(\xi_y(x)=1)=1- \left(1- \frac{1}{y-1}\right)^k \le \frac{\max(k,3)}{y}, \quad (y\ge 2). 
\end{equation}
The case $k=2$ is easy to verify directly. Let us consider the case $k\ge 3$. The inequality is equivalent to 
\begin{equation*}
 f(y):=\frac{k}{y}+\left(\frac{y-2}{y-1}\right)^k \ge 1,\quad\forall\,y\ge 2.
\end{equation*}
 We have 
\[
f'(y)=\frac{k}{y^2}\left[\frac{y^2(y-2)^{k-1}}{(y-1)^{k+1}}-1\right],\quad \frac{y^2(y-2)^{k-1}}{(y-1)^{k+1}}-1\to 0,\quad y\to\infty,
\]
and 
\[
\frac{d}{dy}\log\left[\frac{y^2(y-2)^{k-1}}{(y-1)^{k+1}}\right]=\frac{y(k-3)+4}{(y)_3}>0,\quad y>2, 
\]
for $k\ge 3$. So $f'(y)<0$, whence $f(y)>f(\infty)=1$. 
 
Using  $\din{x}=\sum_{y>x} \xi_y (x)$, by Chernoff's method, we have: setting $K:=\max(3,k)$, for every $d>0$, and $z>1$,
\begin{equation*}
\begin{aligned}
\pr(\din{x} \ge d)&\le\frac{\ex\bigl[z^{\din{x}}\bigr]}{z^d}=\frac{\prod_{y>x}\bigl(z\pr(\xi_y(x)=1)+1-\pr(\xi_y(x)=1)\bigr)}{z^d}\notag\\
&\le\frac{\exp\left((z-1)\sum_{y>x}\pr(\xi_y(x)=1)\right)}{z^d}=\frac{\exp\left(K(z-1)\sum_{y>x}y^{-1}\right)}{z^d}\notag\\
&=z^{-d}\exp\Bigl(K(z-1)S(x)\Bigr), \quad S(x):=\sum_{y>x}y^{-1}.
\end{aligned}
\end{equation*}
Since $S(x)\le \log\frac{n+1}{x+1}$, we get from the estimate above that 
\[
\pr(\din{x}\ge d)\le z^{-d}\left(\frac{x+1}{n+1}\right)^{-K(z-1)},\quad \forall\,z>1.
\]
Consequently
\begin{align*}
&\qquad\quad\sum_{x\in [n]}\!\!\pr\bigl(\din{x}\ge d\bigr)\le z^{-d}\sum_{x=1}^{n+1}\left(\frac{x+1}{n+1}\right)^{-K(z-1)}\\
&\le z^{-d} (n+1)\int_0^1 \eta^{-K(z-1)}\,d\eta\le\frac{n+1}{z^d \bigl[1-K(z-1)\bigr]},
\end{align*}
provided that $K(z-1)<1$. The RHS attains its minimum at 
$
z^*=\frac{d(K+1)}{(1+d)K},
$
and $z^*>1$ if $d>K$. For such $d$, we have
\begin{align*}
\sum_{x\in [n]}\!\!\pr\bigl(\din{x}\ge d\bigr)&\le \left.\frac{n+1}{z^d \bigl[1-K(z-1)\bigr]}\right|_{z=z^*}
\le nd \left(\frac{K}{K+1}\right)^d.
\end{align*}
So, recalling \eqref{1/j!} and using $j!\ge (j/e)^j$, we obtain
\[
\pr(D_j\ge d)\le \frac{1}{j!}\left[nd \left(\frac{K}{K+1}\right)^d\right]^j\le \left[\frac{en}{j}\cdot d\left(\frac{K}{K+1}\right)^d\right]^j.
\]
A standard argument shows that for 
\[
d=d_j:=3\frac{\log(an/j)}{\log(1+1/K)},
\]
and $a>e$ such that $3a^{-2}\log a<1$, we have
\[
\pr(D_j\ge d_j)\le \left(\frac{3\log(an/j)}{(an/j)^2}\right)^j.
\]
We conclude that
\[
\pr\left(\bigcup_{j=1}^n\{D_j\ge d_j\}\right)\le \sum_{j=1}^n \pr(D_j\ge d_j)=O\bigl(n^{-2}\log n\bigr).
\]
Consequently, with probability $1-O\bigl(n^{-2}\log n\bigr)$,  for every $S\subset [n]$ we have
\begin{align*}
\sum_{x\in S}d_{in}(x)&\le \frac{3}{\log\frac{K+1}{K}}\sum_{j=1}^{|S|}\log\left(\frac{an}{j}\right)\le  \frac{3}{\log\frac{K+1}{K}}
\left[|S|\log\left(\frac{an}{|S|+1}\right)+|S|+1\right]\\
&\le b\left[|S|\log\left(\frac{n}{|S|}\right)+|S|\right],
\end{align*}
for $b>0$ sufficiently large.
\end{proof}

\section{Bootstrap percolation}\label{sec:BP}
In this section we prove Theorems \ref{thm:percolation1} and \ref{thm:percolation2} that provide qualitatively matching an upper bound and a lower bound for the percolation threshold.

Since $G_{n,k}$ is connected whp, for $r=1$, as long as there is an initially infected vertex, all the vertices will be eventually infected whp. In fact, by Corollary~\ref{cor:diameter}, all the vertices wil be infected in $O(\log n)$ time. In the rest of the paper we consider only $k\ge 3$ and $2\le r \le k-1$. Note that the minimum degree of $G_{n,k}$ is $k-1$ with positive probability.

For $0<p<1$, let $A_0=A_0(p)$ denote the initial set of infected vertices, which is obtained by including each $x\in [n]$ in $A_0$ with probability $p$, indepedently of each other. In  other words, for any $S\subset [n]$,
\[
\pr(A_0=S)=p^{|S|}(1-p)^{n-|S|}.
\]
For $i\ge 1$, let
\[
A_i=A_{i-1}\cup \{x\,:\, x \text{ has at least $r$ neighbors in } A_{i-1}\},
\]
$B_0=A_0$, and $B_i=A_i\setminus A_{i-1}$ for $i\ge 1$.
Hence $A_i$ is the set of infected vertices by time $i$ and $B_i$ is the set of vertices infected at time $i$.

\subsection{Upper bound for the threshold}

Here we prove Theorem~\ref{thm:percolation1}. We start with the following easy lemma, which plays an important role in the proof of the theorem.

\begin{lemma}\label{first m spreads}
Suppose the first $m$ vertices are infected, where $m\to \infty$ however slowly. Then, whp, all the vertices will be infected eventually. 
\end{lemma}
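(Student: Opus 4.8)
The plan is to show that once the first $m$ vertices are infected, the infection propagates deterministically (given the graph) by a union-bound/martingale-free argument over the remaining vertices processed in increasing order. First I would process the vertices $m+1, m+2, \dots, n$ one at a time and let $A$ denote the current infected set, which starts as $[m]$. The key observation is that a vertex $x > m$ has $k$ out-neighbors chosen independently and uniformly from $[x-1]$, and the number of its out-neighbors that already lie in the infected set $A$ (which at the moment $x$ is examined contains $[m]$ together with all previously infected vertices $y$ with $m < y < x$) is stochastically at least $\operatorname{Bin}(k, |A\cap[x-1]|/(x-1)) \ge \operatorname{Bin}(k, m/(x-1))$ when only $[m]\subseteq A$ is used. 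The idea is that for $x$ in a range where $m/(x-1)$ is not too small, $x$ gets infected with probability bounded below by a constant depending only on $k$ and $r$ — indeed $\pr(\operatorname{Bin}(k,m/(x-1)) \ge r)$ — and once enough vertices in an initial block are infected, the infected set grows, which only helps later vertices.

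Concretely I would argue in stages. Stage one: show that whp every vertex $x$ with $m < x \le Cm$ (for a suitable constant $C = C(k,r)$, e.g. $C$ chosen so that $k m/(Cm) $ makes $\operatorname{Bin}(k, 1/C)\ge r$ have probability $\ge$ some $\delta>0$) gets infected. Actually a cleaner route: since $[m]$ is infected and $m\to\infty$, the expected number of out-neighbors of $x\in(m,2m]$ lying in $[m]$ is $\ge km/(2m) = k/2 \ge r$ is false in general for $r$ up to $k-1$, so instead I would take the range $(m, (1+\eta)m]$ with $\eta$ small enough that $k m /((1+\eta)m) = k/(1+\eta) > r$, giving $\mean$ of the relevant binomial strictly above $r$; then by Chernoff~\eqref{C3} each such $x$ is infected with probability $1 - e^{-\Theta(1)}$, hence in expectation a constant fraction of $(m,(1+\eta)m]$ is infected, and by a second-moment or Chernoff argument (the events for distinct $x$ depend on disjoint coordinate sets, hence are independent) whp at least $cm$ of them are. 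Iterating: once $A$ contains $\ge c'm'$ vertices among $[m']$ for the new threshold $m' = (1+\eta)m$, repeat. Since the infected-density stays bounded below along a geometric sequence $m, (1+\eta)m, (1+\eta)^2m,\dots$ reaching $n$ in $O(\log n)$ steps, and each step fails with probability exponentially small in the current block size $\ge cm \to\infty$, a union bound over the $O(\log n)$ steps gives failure probability $o(1)$.

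The main obstacle I expect is controlling the iteration cleanly: I must maintain a quantitative invariant of the form "at least a fixed fraction $\beta$ of $[M]$ is infected" as $M$ grows geometrically, and verify that this invariant is self-reproducing — i.e.\ if a $\beta$-fraction of $[M]$ is infected, then for $x\in(M,(1+\eta)M]$ the probability that $x$ has $\ge r$ out-neighbors in the infected subset of $[x-1]$ is $\ge \pr(\operatorname{Bin}(k,\beta M/((1+\eta)M))\ge r)$, which is a positive constant once $\beta k/(1+\eta) > r$; one then needs $\beta$ and $\eta$ chosen consistently and needs the fraction not to degrade, for which a Chernoff lower bound~\eqref{C1} on the number of newly infected vertices in the block suffices because these infection events are mutually independent (they involve disjoint batches of selections). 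A minor point to handle is the very first block, where only $[m]$ is available and $m$ may be much smaller than the eventual blocks, but since $m\to\infty$ the Chernoff bounds already apply there. Once $M$ reaches a constant fraction of $n$ the same estimate covers the tail $x$ up to $n$, completing the proof.
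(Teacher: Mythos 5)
There is a genuine gap: your invariant is too weak to yield the conclusion. The statement asserts that \emph{all} vertices become infected, but your stage-by-stage argument only ever maintains that a fixed fraction $\beta<1$ of $[M]$ is infected, and in each block a vertex $x$ is shown to be infected only with a probability $q=\pr\bigl(\operatorname{Bin}(k,\beta/(1+\eta))\ge r\bigr)$ that is bounded away from $1$. Under that estimate a constant proportion of every block is expected to \emph{fail}, so even after reaching $M=n$ you have proved only that a linear-sized set is infected, not the whole vertex set; the closing sentence ``the same estimate covers the tail'' does not bridge this. To get all vertices you need the per-vertex failure probability to be summable over $x>m$, i.e.\ $o(1)$ in total after a union bound, and a density-$\beta$ (or even density-$(1-\eps)$) invariant cannot deliver that. (There are also secondary frictions: selections are with repetition, so ``at least $r$ selections land in $A$'' must be upgraded to ``at least $r$ \emph{distinct} out-neighbors in $A$''; and the self-reproduction of the density requires $q\ge\beta$, which fails for small $\beta$ when $r\ge2$, forcing $\beta$ close to $1$ — but then you are essentially trying to prove the union-bound statement anyway.)

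The paper's proof is much shorter and avoids densities altogether. By Lemma~\ref{no small out-degree}, whp \emph{every} vertex $x\in[m+1,n]$ has out-degree at least $k-1\ge r$ in $\dgnk$ (the point being that $\pr(\dout{x}\le k-2)=O(x^{-2})$ is summable, with total $O(1/m)\to0$). On that whp event the spread is deterministic: if $[t-1]$ is infected then $t$ has at least $k-1\ge r$ infected neighbors (its out-neighbors all lie in $[t-1]$), hence $t$ is infected in the next round, and induction on $t$ starting from $t=m+1$ infects everything. If you replace your Chernoff/density machinery by this single summable per-vertex bound on having fewer than $r$ distinct out-neighbors, your argument collapses to the paper's.
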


\begin{proof}
By Lemma~\ref{no small out-degree}, whp, there is no vertex in $[m+1,n]$ whose out-degree is strictly smaller than $k-1$. This means, for any $t>m$, if the first $t-1$ vertices are infected, then $t$ will be infected in the next round. The proof follows from induction on $t$.
\end{proof}

A \textit{perfect $\ell$-ary tree} is an $\ell$-ary tree, where each nonleaf has exactly $\ell$ children and all the leaves are of the same depth.
In other words, the size of generation $j$ is $\ell^j$ for all $0\le j\le H(T)$, where $H(T)$ denotes the height of $T$.

\begin{proof}[\textbf{Proof of Theorem~\ref{thm:percolation1}}] 
 Recall that $p\ge \omega (\log n)^{-\frac{r}{r-1}}$, where $\omega=
(3\log ^{(3)} n\cdot \log^{(4)} n)^{\frac{r}{r-1}}$. Let $m=m(n)\to \infty$ slowly enough to satisfy the condition $\log m\ll (\log n)/\omega$.
Define 
\[
\nu=\lfloor \omega^{\frac{r-1}{r}} (\log \omega)^{-1}\rfloor, \qquad \ell=\ell(n)=\lceil (\log n)/\nu\rceil.
\] 
Pick a vertex $x\in [m]$. 
We will first prove that, whp, there exists a perfect $\ell$-ary tree of height $\nu$ in $G_{n,k}$ rooted at $x$.
We will use this fact to prove that if $p$ satisfies the above condition, then whp the root $x$ gets eventually infected. 
And this will imply that, for $m\to\infty$ sufficiently slow, whp infection spreads to every vertex in $[m]$, whence to every vertex in $[n]$.

Let $x\in [m]$. Assuming that $\log m\ll (\log n)/\nu$, let us partition the set $(m,n]$ into $\nu:=\nu(n)$
 intervals $\mathcal I_j$, such that $\mathcal I_1=(m,n^{1/\nu}]$, $\mathcal I_j=(n^{(j-1)/\nu},n^{j/\nu}]$, $j\in [2,\nu]$.
We grow the tree, rooted at $x$, such that, recursively,
vertices from $\mathcal I_j$, $j\ge 2$, select the vertices from $\mathcal I_{j-1}$, already identified as the preceding generation in the tree. 

Start with the children of the root $x\in [m]$.  A vertex $y\in \mathcal I_1$ selects $x$ with probability
$1-(1-1/(y-1))^k\sim k/y$. The expected number of vertices
from $\mathcal I_1$ that select $x$ is asymptotic to 
\[
\sum_{y=m+1}^{n^{1/\nu}}\frac{k}{y}\sim k\log \lp \frac{n^{1/\nu}}{m} \rp  
\sim k\frac{\log n}{\nu} \sim k\ell.
\]
The number itself is a sum of independent Bernoulli variables. 
Hence, by~\eqref{C3}, this number exceeds $\ell$ with probability at least $1-e^{-c\ell}$ for some constant $c\in (0, (k-1)^2/(3k))$.
On this event, we keep 
$\ell$ vertices as the children of $x$, and discard the rest of the vertices from 
$\mathcal I_1$ that selected $x$. Turn to the second generation. The expected number of vertices from $(n^{1/\nu}, n^{2/\nu}]$ that select the smallest vertex of the first generation is
\[
\sum_{y=n^{1/\nu}+1}^{n^{2/\nu}}\frac{k}{y}\sim k\log\frac{n^{2/\nu}}{n^{1/\nu}}\sim k\ell.
\]
By~\eqref{C3}, with probability at least $1-e^{-c\ell}$, we have a set of vertices of cardinality  between
$\ell$ and $2k\ell$, that selected the smallest vertex of the first generation. 
Call this event $\mathcal E_{2,1}$. On $\mathcal E_{2,1}$, we keep $\ell$ vertices as the children of the smallest vertex and discard the excess vertices. 
There remain $R_1:=n^{2/\nu}-n^{1/\nu}-O(\ell)$ vertices in $\mathcal I_2=(n^{1/\nu}, n^{2/\nu}]$ which may select the second smallest vertex,
given that they did not select the smallest vertex.  Conditioned on $\mathcal E_{2,1}$, their choices continue
to be independent and uniform over the remaining vertices. In particular, a vertex $y$ among the remaining $R$
vertices selects the second smallest vertex in the first generation with (conditional) probability $1-(1-1/(y-2))^k\sim k/y$.
This means that,  conditioned on the event $\mathcal E_{2,1}$, the  set of vertices 
selecting the second smallest vertex has cardinality that {\it stochastically dominates (is dominated by)\/}  the sum of
$R$ Bernoulli random variables with individual probabilities again close to $k/j$, $j\in [n^{2/\nu}-R_1+1,n^{2/\nu}]=[n^{1/\nu}+O(\ell),n^{2/\nu}]$  $\,(j\in (n^{1/\nu}, n^{2/\nu}-O(\ell)], \text{resp.})$. 
So, conditioned on the event $\mathcal E_{2,1}$, the event $\mathcal E_{2,2}=$ ``there are
between $\ell$ and $2k\ell$ vertices  that chose  the second smallest vertex'' has
(conditional) probability at least $1-e^{-c\ell}$. We keep $\ell$ of them attached to the second smallest vertex, discard the remaining selectors, and turn to the third smallest vertex. In $\ell$ steps, we arrive at a
sequence of $\ell$ events $\mathcal E_{2,t}$, such that 
\[
\pr\bigl(\mathcal E_{2,t}\boldsymbol | \mathcal E_{2,1},\dots,\mathcal E_{2,t-1}\bigr)\ge 1-e^{-c\ell},
\]
implying
\[
\pr\Biggl(\,\bigcap_{t=1}^{\ell}\mathcal E_{2,t}\Biggr)\ge \left(1-e^{-c\ell} \right)^{\ell}
\ge 1-e^{-c\ell/2}.
\]
(Note that, conditioned on ${\cal E}_{2,1}\cap\cdots\cap {\cal E}_{2,t-1}$, the number of discarded vertices in $I_2$ lies between $(t-1)\ell$ and $2k(t-1)\ell$. Hence, after step $t-1$, the number of ``available'' vertices in $I_2$ is $n^{2/\nu}-n^{1/\nu}-O(t\ell)$. 
In other words, with high probability, for every child of the root we determine the child's own $\ell$ children, completely determining the second generation. Then we move to the
third generation, with $\ell^2$ attendant $\mathcal E_{3,t}$ events, whose intersection is the event that every member
of the second generation has their own $\ell$ children in the third generation, such that
\[
\pr\Bigg(\,\bigcap_{t=1}^{\ell^2}\mathcal E_{3,t}\Bigg)
\ge (1-e^{-c\ell})^{\ell^2}
\ge 1-e^{-c\ell/2}.
\]
In the same fashion we identify the remaining $4$-th,..., $\nu$-th generations. The corresponding bounds
\[
\pr\Biggl(\,\bigcap_{t=1}^{\ell^{j-1}}\mathcal E_{j,t}\Biggr)
\ge (1-e^{-c\ell})^{\ell^{j-1}}
\ge 1-e^{-c\ell/2},				\quad 4\le j\le \nu,
\]
remain valid if $\nu\ll \sqrt{(\log n)/\log\log n}$ which certainly holds for our choice of $\nu$.
With probability exceeding
\[
\left(1-e^{-c\ell/2}\right)^{\nu} \ge 1-e^{-c\ell/3},
\]
in $\nu$ steps we will build a full $\ell$-ary tree of height $\nu$.

Given the tree, let us see whether the root $x$ is infected via the tree by only the initially infected vertices in the last generation. 
The vertices of the $j$-th generation are infected {\it independently\/} of each other, and with the {\it same\/} probability $p_j$, and $p_0$ is the probability that the root $x$ is eventually infected. 
Since the initially infected set and the $\ell$-ary tree are independent, we have the recurrence equation
\begin{equation}\label{recurr}
p_{j-1}=\pr\Bigl(\text{Bin}(\ell,p_j)\ge r\Bigr),\quad p_{\nu}=p.
\end{equation}

Importantly, the sequence $\{p_j\}_{0\le j\le \nu}$ does not depend either on the root $x\in [m]$ or $m$ itself.
Moreover, by~\eqref{recurr}, $p_0\to 1$ if $\liminf_{n\to\infty}\max\{p_j\ell(n):\,j\in [2,\nu]\}>0$. 
Indeed, in this case, for some sequence $\{n_s\}$ and $j=j(n_s)\in [2,\nu]$, we have $\lim\ell(n_s) p_{j(n_s)}>0$. 
This implies $\liminf p_{j(n_s)-1}>0$, which implies, in its turn, that $\lim p_{j(n_s)-2}=1$, and so $\lim p_0=1$.

Suppose that $\liminf_{n\to\infty}\max\{p_j\ell(n):\,j\in [2,\nu]\}=0$. Using \eqref{recurr}, we have: there is a subsequence
$\{n_s\}$ such that for $n\in\{n_s\}$ sufficiently large,
\[
p_{j-1}\ge \binom{\ell(n)}{r}p_j^r (1-p_j)^{\ell(n)-r}
\ge\frac{1}{2\,r!}\ell(n)^rp_j^r,\quad\forall\, j\in [2,\nu].
\]
Iterating this bound, we get
\begin{align*}
p_1\ge\frac{1}{2r!} (\ell(n)p_2)^r&\ge\frac{1}{(2\,r!)^{1+r}}\ell(n)^{r+r^2} p_3^{r^2}\\
&=\cdots=\frac{1}{(2\,r!)^{1+\cdots+r^{\nu-2}}}\ell(n)^{r+\dots+r^{\nu-1}}p_{\nu}^{r^{\nu-1}}\\
&=(2\,r!)^{-\frac{r^{\nu-1}-1}{r-1}}\ell(n)^{\frac{r^{\nu}-r}{r-1}}p^{r^{\nu-1}}=: E(n,\nu).
\end{align*}
Taking the logarithm and using $p\ge \omega (\log n)^{-\frac{r}{r-1}}$ gives
\[
\log E(n,\nu) \ge r^{\nu-1}\log\omega-\frac{r}{r-1}\log^{(2)}n -\frac{r^{\nu}}{r-1}\log\nu-O(r^{\nu}).
\]
We will get a contradiction when we show that $\log E(n,\nu)\to\infty$. 
Note that
\[
r^{\nu-1}\log \omega -\frac{r^{\nu}}{r-1}\log\nu-O(r^{\nu})\ge \frac{r^{\nu}}{r-1}\log^{(2)}\omega-O(r^{\nu})\ge 0.5 \frac{r^{\nu}}{r-1}\log^{(2)}\omega,
\]
which gives
\[
\log E(n,\nu)\ge\frac{r\log^{(2)}n}{r-1}\left[0.5\,\frac{r^{\nu-1}\log^{(2)}\omega}{\log^{(2)}n}-1\right]
\]
and
\begin{align*}
\log\frac{r^{\nu-1}\log^{(2)}\omega}{\log^{(2)}n}&=(\nu-1)\log r+\log^{(3)}\omega-\log^{(3)}n\\
&\ge 0.99\,\omega^{\frac{r-1}{r}}\frac{\log r}{\log \omega}-\log^{(3)}n\to\infty,
\end{align*}
since $\omega\ge (3\log^{(3)}n\cdot\log^{(4)}n)^{\frac{r}{r-1}}$. 
Thus we have
\[
\liminf_{n\to\infty}\max\{p_j\ell(n):\,j\in [2,\nu]\}>0,
\]
implying  $\lim_{n\to\infty}p_0=1$. We emphasize that $p_0$ does not depend
on $m$. Choosing $m=m(n)\to\infty$ so slowly that 
$m(1-p_0)\to 0$, we conclude: with probability $1-o(1)$ every vertex in $[m]$ gets eventually infected. 
Using Lemma \ref{first m spreads} we complete the proof of the theorem.
\end{proof}

\subsection{Lower bound for the threshold}

Hoping that the upper bound for the percolation threshold is qualitatively sharp, we embark on a proof that whp there is no complete percolation for $p\le \omega^{-1}(\log n)^{-r/(r-1)}$, where $\omega\to\infty$ however slowly as $n\to\infty$. To this end, we introduce and analyze certain types of rooted graphs contained in $G_{n,k}$, whose presence is necessary for their roots to get eventually infected.

Let $B_0=A_0$ and $B_i=A_i\setminus A_{i-1}$ as before for $i\ge 1$.
For $j\ge 1$, if a vertex $x$ belongs to $B_j$, there must exist a subgraph of $G_{n,k}$ together with some initial infection conditions that certifies $x\in B_j$.  For instance, if $x\in B_1$, then $x$ must have $r$ neighbors in $B_0$. Hence $G_{n,k}$ contains a star with $r$-leaves and the central vertex $x$, where each of the $r$ leaves belongs to $B_0$. We will call such subgraphs witness graphs. 

\begin{definition}[Witness graphs]
 A \emph{witness graph of depth $j$ for $x$} is a rooted subgraph $W_j=W_j(x)$ of $G_{n,k}$ satisfying the following:
\begin{enumerate}[noitemsep, topsep=0pt]
\item[(I)] The vertex set of $W_j$ is $V=\cup_{i=0}^j V_i$, where
\begin{itemize}[noitemsep, topsep=0pt]
\item $V_i\subset B_i$ for all $i$
\item $V_j=\{x\}$
\end{itemize}
\item[(II)] For $i\ge 1$, each vertex in $V_i$ has at least one neighbor in $V_{i-1}$ and exactly $r$ neighbors in $V_0\cup \cdots \cup V_{i-1}$.
\end{enumerate}
If there are two adjacent vertices $u$ and $v$ in $W_j$, where $u\in V_s$ and $v\in V_t$ with $s<t$, then we say that $v$ is a parent of $u$ and we also say that $u$ is a child of $v$. 
Hence, except the vertices in $V_0$, every vertex has $r$ children, and except the root vertex, every vertex has at least one parent (there may be many). Finally,  although it is just a technicality, we view $W_j$ as a directed graph, where the edges are oriented toward the children. (See Figure~\ref{fig:WG}.)
\end{definition}

\begin{note}
The orientations of the edges in witness graphs are not necessarily the same as the orientations of the edges in the directed version of $G_{n,k}$, where all the edges are oriented toward smaller vertices. 
\end{note}

Note that witness graphs depend on $A_0$, the set of initially infected vertices, as well as $G_{n,k}$.
They cannot have directed cycles and they must  have at least $r$ vertices with out-degree 0. From now on, we will call the vertices in such graphs with out-degree 0 \textit{leaves} and the rest of the vertices \textit{internal vertices}. Leaves lie in $A_0$ and they spread the infection to other vertices of the witness graphs.

\begin{figure}[h]
\centering
\begin{tikzpicture}
 [scale=1,auto=left,every node/.style={ inner sep=2pt}, every loop/.style={min distance=20mm}]
 
 \tikzset{->-/.style={decoration={
  markings,
  mark=at position .5 with {\arrow{>}}},postaction={decorate}}}
  
 \fill[fill=blue!20!] (-3.5,-5.5) rectangle (6,-4.5);
 \node at (5.7,-5) {$V_0$};
 
 \fill[fill=orange!20!] (-3.5,-4.5) rectangle (6,-3.5);
 \node at (5.7,-4) {$V_1$};
 
  \fill[fill=red!20!] (-3.5,-3.5) rectangle (6,-2.5);
 \node at (5.7,-3) {$V_2$};
 
\fill[fill=yellow!20!] (-3.5,-2.5) rectangle (6,-1.5);
 \node at (5.7,-2) {$V_3$};
 
 \fill[fill=green!20!] (-3.5,-1.5) rectangle (6,-0.5);
 \node at (5.7,-1) {$V_4$};
 
  \fill[fill=brown!20!] (-3.5,-0.5) rectangle (6,0.5);
 \node at (5.7,0) {$V_5$};
 
\coordinate (n0) at (0,0) {};
\fill (n0) circle[radius=3pt] node[right] {$\ x$};

\coordinate (n1) at (1,-1) {};
\fill (n1) circle[radius=3pt] node[above] {$$};

\coordinate (n2) at (-1,-2) {};
\fill (n2) circle[radius=3pt] node {$$};

\coordinate (n3) at (2,-2) {};
\fill (n3) circle[radius=3pt] node {$$};

\coordinate (n4) at (0,-3) {};
\fill (n4) circle[radius=3pt] node[above] {$$};

\coordinate (n5) at (-2,-4) {};
\fill (n5) circle[radius=3pt] node[above] {$$};

\coordinate (n6) at (1,-4) {};
\fill (n6) circle[radius=3pt] node[above] {$$};

\coordinate (n7) at (3,-4) {};
\fill (n7) circle[radius=3pt] node[above] {$$};

\coordinate (n8) at (-3,-5) {};
\fill (n8) circle[radius=3pt] node[above] {$$};

\coordinate (n9) at (-1,-5) {};
\fill (n9) circle[radius=3pt] node[above] {$$};


\coordinate (n11) at (0,-5) {};
\fill (n11) circle[radius=3pt] node[above] {$$};

\coordinate (n12) at (2,-5) {};
\fill (n12) circle[radius=3pt] node[above] {$$};

\coordinate (n13) at (4,-5) {};
\fill (n13) circle[radius=3pt] node[above] {$$};

\foreach \from/\to in {n0/n1, n0/n2,n1/n2,n1/n3,n2/n4,n2/n5, n3/n7,n3/n4,n5/n8,n5/n9,n4/n6,n4/n9,n6/n12,n6/n11,n7/n13,n7/n12}
  \draw[thick,->-] (\from) -- (\to);
  
\end{tikzpicture}
\caption{A witness graph of depth 5 for $x$. Here $r=2$.}
\label{fig:WG}
\end{figure}
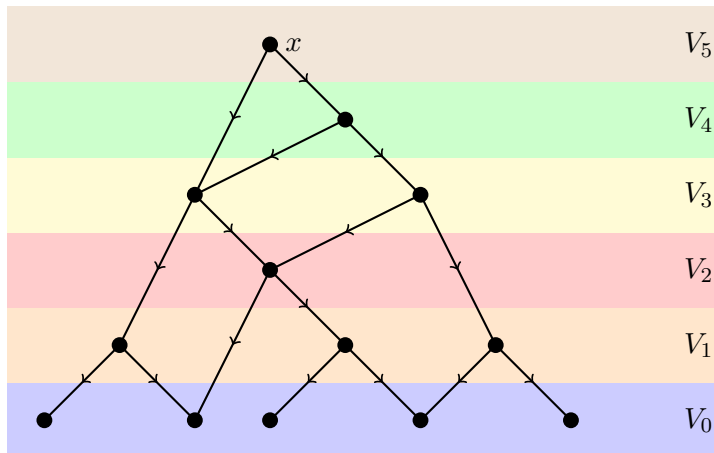

In order to show that a vertex is not infected whp, it is enough to show that it is not infected initially whp and the expected number of witness graphs for that vertex tends to 0.
In the rest of this section, we use $t$ and $\ell$ for the number of internal vertices and leaves, respectively. 
Recalling that $p\le \omega^{-1}(\log n)^{-r/(r-1)}$, we introduce $n_0=\omega^{1/2} (\log n)^{r/(r-1)}$. While $1/2$ could be replaced
with any number from $(0,1)$, the power of $\log n$ is the largest possible for $pn_0\to 0$. Under this condition, whp no vertex
in $[n_0]$ is initially infected, i.e. whp $[n_0]\cap A_0=\emptyset$. Using the witness graphs, we are going to show that whp no vertex from $[n_0]$ gets eventually infected either.

Let $V:=\{x^*, x_1,\dots, x_{\ell+t-1}\}$ stand for the vertex set of a generic witness graph.
We first consider the case when $\min_{j\ge 1} x_j\ge n_0$.

Let $L$ and $T\ni x$ be a partition of $V$, such that all vertices in $L$ have out-degree zero, all vertices in $T$ have out-degree $r$, and $\ell=|L|$ and $t=|T|$.
 Let $\outdx$ and $\indx$ denote, respectively, the out-degree and the in-degree of a vertex $x\in V$, and $\bold d:=\{\outdx,\indx\}_{x\in V}$.  So 
\begin{equation}\label{admissable d}
\outdx=
\begin{cases}
r, & \text{ if }x\in T\\
0, & \text{ if } x\in L
\end{cases}
\qquad \text{and} \qquad 
\begin{cases}
\indx>0, & \text{ if }x\neq x^*\\
\indx=0, & \text{ if } x=x^*.
\end{cases}
\end{equation}
The total number of (directed) edges is $m=rt=\sum_x \outdx =\sum_x \indx$. The total number 
of the digraphs on vertex set $V$, with the partition $V=T\cup L$, and the given in/out degrees is at most 
\[
\frac{m!}{\prod_{x\in V}\outdx!\,\indx!}.
\]
The event ``$(x',x)$ is an edge'' happens with probability $1-(1-1/[\max(x,x')-1])^k$, which is at most $K/\max(x,x')$, $K=\max(k,3)$,
see \eqref{<k/y}.  If $x$ is a child of $x'$  we will use the bound $K/x$. Furthermore, all the edge-indicators are negatively correlated. 
So the expected number of digraphs rooted at $x^*$, with an admissible  (as in~\eqref{admissable d}) in/out degree sequence, such that all vertices in $L$ are initially  infected, is at most
\begin{align}
&p^{\ell} \frac{(rt)!\, K^{rt}}{(r!)^t}\sum_{\bold{d} \text{ meets } \eqref{admissable d}}
\,\prod_{x}\frac{x^{-\indx}}{\indx!}\notag\\
&\qquad=p^{\ell} \frac{(rt)!\, K^{rt}}{(r!)^t}\prod_{x\neq x^*} x^{-1}\sum_{\bold d \text{ meets } \eqref{admissable d}}\, \prod_{x\neq x^*}
 \frac{1}{x^{\indx-1}\indx!}. \label{1bound}
\end{align} 
Here, since $x\ge n_0$ and 
\[
\sum_{x\neq x^*}(\indx-1)=rt -(t+\ell-1)=(r-1)t-\ell+1,
\]
the sum on the RHS of \eqref{1bound} is bounded above by
\begin{align}
n_0^{-(r-1)t+\ell-1}\!\!\!\!\!\!\!\sum_{\bold{d} \text{ meets } \eqref{admissable d}}\prod_{x\neq x^*}\frac{1}{\indx!}
&\le n_0^{-(r-1)t+\ell-1}\!\!\!\!\!\!\!\sum_{d_1,\dots,d_{t+\ell-1}\ge 0\atop
d_1+\cdots+d_{t+\ell-1}=rt}\,\,\prod_{j=1}^{t+\ell-1}\frac{1}{d_j!}\notag\\
&=n_0^{-(r-1)t+\ell-1}\,\frac{(t+\ell-1)^{rt}}{(rt)!},\label{2bound}
\end{align}
which depends on $V$ only through $t=|T|$ and $\ell=|L|$. The product $\prod_{x\neq x^*} x^{-1}$ also does not depend on the choice
of partition of $V$ into $T\setminus\{x^*\}$ and $L$. So we replace the sum in \eqref{1bound} with the RHS in \eqref{2bound}
and then sum the resulting bounds over all $V$, with $V=T\cup L$, $T\ni x^*$, $|T|=t$, $|L|=\ell$.  Observe that
\[
\sum_{V}\,\,\prod_{x\in V\setminus\{x^*\}}\frac{1}{x} \le  \frac{\binom{t+\ell-1}{\ell}}{(t+\ell-1)!}\left(\sum_{x=n_0}^n \frac{1}{x}\right)^{t+\ell-1} \le \frac{\binom{t+\ell-1}{\ell}}{(t+\ell-1)!}\cdot(\log n)^{t+\ell-1} .
\] 
Therefore the expected number of digraphs in question is bounded above by
\beq\label{expected digraphs}
p^{\ell}\,\frac{n_0^{-(r-1)t+\ell-1}k^{rt}}{(r!)^t}\cdot\frac{(t+\ell-1)^{rt}}{(t+\ell-1)!}\cdot\binom{t+\ell-1}{\ell}(\log n)^{t+\ell-1}.
\eeq
Using $\mu!\ge (\mu/e)^{\mu}$ and $t+\ell-1\le rt$ gives
\begin{align*}
\frac{(t+\ell-1)^{rt}}{(t+\ell-1)!} \le e^{t+\ell-1}(t+\ell-1)^{(r-1)t-\ell+1} \le (er)^{rt}t^{(r-1)t-\ell+1}.
\end{align*}
Combining this with $r!>(r/e)^r$ and $ \binom{t+\ell-1}{\ell}\le 2^{t+\ell-1}\le 2^{rt}$ in~\eqref{expected digraphs}, we get the following lemma.

\begin{lemma}\label{fn(t,ell)} Let $E_{t,\ell}(x^*)$ stand for the expected number of the witness graphs rooted at $x^*$, with
parameters $t$, $\ell$, such that the smallest non-root vertex in the graph is at least $n_0$. Then
\begin{align*}
&E_{t,\ell}(x^*)\le_b \mathcal E_{t,\ell}:=p^{\ell}(\log n)^{t+\ell-1}\left(\frac{t}{n_0}\right)^{(r-1)t-\ell+1}\!\!\! \lp 2Ke^2\rp^{rt}.
\end{align*}
\end{lemma}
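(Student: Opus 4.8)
The estimate is a careful counting bound, and the plan is to set up the union bound over all witness graphs so that the resulting sum factorizes through $t$ and $\ell$ alone, and then to tidy it up by Stirling-type inequalities. I would stratify a generic witness graph rooted at $x^*$ with $t$ internal vertices and $\ell$ leaves by three pieces of data: the vertex set $V=T\sqcup L$ with $x^*\in T$, $|T|=t$, $|L|=\ell$, and every vertex of $V\setminus\{x^*\}$ lying in $[n_0,n]$; the in/out degree sequence $\bold d=\{\outdx,\indx\}_{x\in V}$ subject to \eqref{admissable d}; and the directed edge set compatible with $\bold d$. For fixed $V$ and $\bold d$ there are at most $(rt)!/\prod_{x}(\outdx!\,\indx!)$ compatible edge sets (there are $m=rt$ edges in total). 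Each fixed edge set is a subgraph of $\gnk$; since the (unoriented) edge $\{x',x\}$ is present with probability $1-(1-1/(\max(x,x')-1))^k\le K/\max(x,x')\le K/x$ when $x$ is the child, and all edge-indicators of $\gnk$ are negatively correlated, the probability that all $rt$ edges are present is at most $K^{rt}\prod_{x\ne x^*}x^{-\indx}$. Multiplying by $p^{\ell}$ for the event $L\subset A_0$ and summing over $\bold d$ gives \eqref{1bound}.

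Next I would carry out the sum over degree sequences. Factoring one copy of $x^{-1}$ out of each non-root vertex (legitimate since $\indx\ge 1$ there) leaves $\sum_{\bold d}\prod_{x\ne x^*}\frac{1}{x^{\indx-1}\,\indx!}$; since each such $x$ satisfies $x\ge n_0$ and $\sum_{x\ne x^*}(\indx-1)=rt-(t+\ell-1)=(r-1)t-\ell+1\ge 0$ — nonnegative because each of the $t+\ell-1$ non-root vertices is the head of at least one of the $rt$ edges — this sum is at most $n_0^{-((r-1)t-\ell+1)}\sum_{d_1+\cdots+d_{t+\ell-1}=rt}\prod_j 1/d_j!=n_0^{-((r-1)t-\ell+1)}(t+\ell-1)^{rt}/(rt)!$ by the multinomial theorem, which is \eqref{2bound}. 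The resulting bound depends on $V$ only through $t$ and $\ell$, and $\prod_{x\ne x^*}x^{-1}$ does not depend on which vertices of $V\setminus\{x^*\}$ are leaves, so I would finish the structural part by summing over vertex sets: $\sum_{V}\prod_{x\in V\setminus\{x^*\}}x^{-1}\le \frac{\binom{t+\ell-1}{\ell}}{(t+\ell-1)!}\bigl(\sum_{x=n_0}^{n}x^{-1}\bigr)^{t+\ell-1}\le \frac{\binom{t+\ell-1}{\ell}}{(t+\ell-1)!}(\log n)^{t+\ell-1}$. Collecting the pieces yields \eqref{expected digraphs}.

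The last step is pure simplification of \eqref{expected digraphs}. Using $(t+\ell-1)!\ge((t+\ell-1)/e)^{t+\ell-1}$ and $t+\ell-1\le rt$ gives $(t+\ell-1)^{rt}/(t+\ell-1)!\le e^{t+\ell-1}(t+\ell-1)^{(r-1)t-\ell+1}\le(er)^{rt}t^{(r-1)t-\ell+1}$; combining with $(r!)^{-t}\le(e/r)^{rt}$ (from $r!>(r/e)^r$) and $\binom{t+\ell-1}{\ell}\le2^{t+\ell-1}\le2^{rt}$, the four factors geometric in $rt$ collapse to $K^{rt}(er)^{rt}(e/r)^{rt}2^{rt}=(2Ke^2)^{rt}$, leaving precisely $\mathcal E_{t,\ell}$. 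The one point needing genuine care is the exponent bookkeeping — that $(r-1)t-\ell+1$ equals $\sum_{x\ne x^*}(\indx-1)$ and is nonnegative, which is exactly what makes the step $x^{-(\indx-1)}\le n_0^{-(\indx-1)}$ run in the right direction; the only substantive (already available) input beyond elementary estimates is the negative correlation of the edge-indicators of $\gnk$, which licenses replacing the probability of a conjunction of edge events by the product of their marginals.
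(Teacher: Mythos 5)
Your proposal reproduces the paper's argument essentially step for step: the same stratification over vertex set, admissible degree sequence, and compatible edge set, the same use of negative correlation with the $K/x$ bound per edge, the same multinomial-theorem evaluation of the degree-sequence sum after pulling out $n_0^{-((r-1)t-\ell+1)}$, the same harmonic-sum bound over vertex sets, and the same Stirling-type simplifications collapsing the geometric factors to $(2Ke^2)^{rt}$. It is correct and matches the paper's proof.
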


We are now ready to prove Theorem~\ref{thm:percolation2}.

\begin{proof}[{\bf Proof of Theorem~\ref{thm:percolation2}}]

\noindent Let $t_0=\log n$ and let ${\cal W}_0$ denote the set of witness graphs with at most $t_0$ internal vertices whose roots lie in $[n_0]$ and all non-root vertices lie outside of $[n_0]$. Also let ${\cal W}_1$ denote the set of witness graphs with more than $t_0$ internal vertices.
Using Lemma~\ref{fn(t,ell)}, we will show that the following two events hold whp:
\begin{enumerate}[noitemsep, topsep=0pt]
\item[(i)] $\{{\cal W}_0=\emptyset\}$,
\item[(ii)] $\{{\cal W}_1=\emptyset\}$.
\end{enumerate}
Note that, together with the fact that $[n_0]\cap A_0=\emptyset$ whp, these two items imply that vertices in $[n_0]$ never get  infected whp. 
Let us show (i) first. By Lemma~\ref{fn(t,ell)},
\begin{align*}
\mean\big[|{\cal W}_0|\big] 
&\le n_0\sum_{t=1}^{t_0}\sum_{\ell=r}^{(r-1)t+1}E_{t,\ell}(x^*) \\
&\le \sum_{t=1}^{t_0}\, \ \sum_{\ell=r}^{(r-1)t+1}n_0\, p^{\ell}(\log n)^{t+\ell-1}\left(\frac{t}{n_0}\right)^{(r-1)t-\ell+1}\!\!\!\cdot \lp 2Ke^2\rp^{rt}
\end{align*}
Using $p\le \omega^{-1}(\log n)^{-r/(r-1)}$ and 
$n_0=\omega^{1/2}(\log n)^{r/(r-1)}$ and 
$t\le \log n$ on the right side above gives
\begin{align} \label{simplified expectation}
\mean\big[|{\cal W}_0|\big]  
&\le \sum_{t=1}^{t_0}\sum_{\ell=r}^{(r-1)t+1} \lp 2Ke^2\rp^{rt} \lp \frac{t}{\log n}\rp^{(r-1)t-\ell+1} \, \lp \omega^{-1/2}\rp^{(r-1)t+\ell}		\notag	\\
&\le \sum_{t=1}^{t_0}\sum_{\ell=r}^{(r-1)t+1} \lp 2Ke^2\rp^{rt} \lp \omega^{-1/2}\rp^{(r-1)t+\ell} \to 0,
\end{align}
which gives (i) above, i.e.\ whp no vertex in $[n_0]$ is infected through a witness graph with at most $t_0$ internal vertices.

In order to prove (ii), we first prove that whp ${\cal W}_1'=\emptyset$, 
where ${\cal W}_1'$ denotes the set of witness graphs with at least $t_0/r$ and at most $t_0$ internal vertices. 
Since the event (i) holds whp, it suffices to consider  the graphs in ${\cal W}_1'$ whose whole vertex set belongs to the interval $[n_0+1,n]$.
Using Lemma~\ref{fn(t,ell)} and similar computations that led to~\eqref{simplified expectation}, we see that the expected number of those special witness graphs in ${\cal W}_1'$ is at most
\begin{align*}
&n\sum_{t=t_0/r}^{t_0}\sum_{\ell=r}^{(r-1)t+1}{\cal E}_{t,\ell} \\
&\le \frac{n}{(\log n)^{r/(r-1)}} \sum_{t=t_0/r}^{t_0}\sum_{\ell=r}^{(r-1)t+1}  \lp 2Ke^2\rp^{rt} 
\lp \frac{t}{\log n}\rp^{(r-1)t-\ell+1} \lp \omega^{-1/2}\rp^{(r-1)t+\ell+1}		\\
&\le n \sum_{t=t_0/r}^{t_0}  \lp \lp 2Ke^2\rp^{r} \omega^{-(r-1)/2}\rp^{t} .
\end{align*}
Since $\omega\to \infty$ and $t \in [(\log n)/r,\log n]$ in the last sum above, the bound  tends to 0. Hence whp ${\cal W}_1'=\emptyset$. Assuming this event, we prove that ${\cal W}_1=\emptyset$, whence whp there are no witness graphs with more than $t_0$  vertices. 

Indeed, consider  hypothetical witness graphs in ${\cal W}_1$ and let $H$ be such a witness graph with the smallest number of internal vertices $s$, where $s>t_0$.
Let $x$ be the root of $H$. For every one of $r$ children $x'$ of $x$, the  subgraph $H'$ of $H$ rooted at $x'$ is either a witness graph itself or a single-vertex graph.  In the first case, by the minimality of $s$,  the number of internal vertices in $H'$ must be at most $t_0$. Furthermore, the total number of the internal vertices of these subgraphs is at least $s-1\ge t_0$. So there is
a subgraph $H'$ with at least $t_0/r$ internal vertices. This means that there is a witness graph $H'$ with the number of internal vertices lying in $[t_0/r,t_0]$, which 
contradicts our assumption that the event ${\cal W}_1'=\emptyset$ holds. Hence whp ${\cal W}_1$ is empty, which finishes the proof.
\end{proof}


\begin{thebibliography}{99}

\bibitem{AbF}
M. Abdullah and N.Fountoulakis, 
A phase transition on the evolution of bootstrap percolation processes on preferential attachment graphs, \emph{preprint}, arXiv:1404.4070

\bibitem{Ami1}
H. Amini, Bootstrap percolation and diffusion in random graphs with given vertex degrees,
\emph{ Electron. J. Combin.} {\bf 17} (2010), R25.

\bibitem{Ami2} 
H. Amini, Bootstrap percolation in living neural networks, \emph{J. Stat. Phys.} {\bf141} (2010), 459--475.

\bibitem{AmF}
H. Amini and N. Fountoulakis, Bootstrap percolation in power-law random graphs, 
\emph{J. Stat. Phys.} {\bf 155} (2014), 72--92.

\bibitem{AFP}
H. Amini, N. Fountoulakis and K. Panagiotou, Bootstrap Percolation in Inhomogenous Random Graphs,
\emph{preprint}, arXiv:1402.2815.

\bibitem{BP}
J. Balogh and B. Pittel, Bootstrap percolation on the random regular graph, \emph{Random Structures Algorithms} {\bf 30} (2007), 257--286.
\bibitem{BA}
A.L. Barab\'asi and R. Albert, Emergence of scaling in random networks, \emph{Science} {\bf 286} (1999), 509--512.
\bibitem{BenKozWor}
I. Benjamini, G. Kozma and N. Wormald, The mixing time of the giant component of a random graph, \emph{Random Structures Algorithms} {\bf 45} (2014), 383--407.
\bibitem{BRST}
B. Bollob\'as, O. Riordan, J. Spencer and G. Tusn\'ady, The degree sequence of a scale-free
random graph process, \emph{Random Structures and Algorithms} {\bf 18} (2001), 279--290.
\bibitem{BolTho}
B. Bollob\'as and A. G. Thomason, Random graphs of small order, \emph{Ann. Discr. Math} {\bf 28} (1985), 47--97.
\bibitem{CL}
F. Chung and L. Lu, 
Connected components in random graphs with given expected degree sequences,
\emph{Ann. Comb.} {\bf 6} (2002), 125--145.

\bibitem{CL2}
F. Chung and L. Lu, The average distance in a random graph with given expected degrees,
\emph{Internet Mathematics}, 1(1):91–113, 2003.
\bibitem{DubRan}
D. Dubhashi and D. Ranjan, Balls and bins: a study in negative dependence,  \emph{Random Structures Algorithms}  \textbf{13} (1998) 99--124.

\bibitem{EGGS}
R. Ebrahimi, J. Gao, G. Ghasemiesfeh and G. Schoenenbeck, 
How complex contagions spread quickly in the preferential attachment model and other time-evolving networks, \emph{IEEE Trans. Network Sci. Eng. 4} (2017), 201--214. 

\bibitem{FouRee}
N. Fountoulakis and B. A. Reed, The evolution of the mixing rate of a simple random walk on the giant component of a random graph, \emph{Random Structures Algorithms} {\bf 33} (2008),  68--86.

\bibitem{FGPR}
A. Frieze, X. P\'erez-Gim\'enez, P. Pra\l{}at and B. Reiniger,
Perfect matchings and Hamiltonian cycles in the preferential attachment model,
\emph{preprint}, arXiv:1610.07988

\bibitem{FriPit}
A. Frieze and B. Pittel, Perfect matchings in random graphs with prescribed minimum degree, \emph{Trends in Mathematics}, (2004) 95--132.

\bibitem{JLR}
S. Janson, T. \L uczak and A. Ruci\'nski, \emph{Random Graphs}, Wiley, New York, 2000.

\bibitem{JLTV}
S. Janson, T. \L uczak, T. Turova and T. Vallier, 
Bootstrap percolation on the random graph $G_{n,p}$, \emph{Ann.\ Appl.\ Probab.} {\bf 22} (2012), 1989--2047

\bibitem{Lov}
L. Lov\'asz, Random walks on graphs: a survey, \emph{Combinatorics, Paul Erd\H{o}s is eighty}, {\bf 2 } (1993), 353--397.

\bibitem{MJKS}
A. Magner, S. Janson, G. Kollias and W. Szpankowski, On symmetry of uniform and preferential attachment graphs, \emph{Electron. J. Combin.} {\bf 21} (2014), Paper 3.32, 24 pp.

\bibitem{Pittel}
B. Pittel, Note on the heights of random recursive trees and random m-ary search trees, \emph{Random Structures Algorithms} {\bf 5} (1994), 337--347.

\bibitem{SinJer}
A. Sinclair and M. Jerrum, Conductance and the rapid mixing property for Markov chains: the approximation of the permanent resolved, \emph{Proc. 20th ACM STOC} (1988), 235--244.



\end{thebibliography}
\end{document}